\numberwithin{equation}{section}
\numberwithin{figure}{section}
\theoremstyle{plain}
\newtheorem{theorem}{\sffamily Theorem}[section]
\newtheorem{proposition}[theorem]{\sffamily Proposition}
\newtheorem{lemma}[theorem]{\sffamily Lemma}
\newtheorem{corollary}[theorem]{\sffamily Corollary}
\newtheorem{example}[theorem]{\sffamily Example}
\newtheorem{remark}[theorem]{\sffamily Remark}
\newtheorem{definition}[theorem]{\sffamily Definition}
\def\BET{\begin{theorem}}
\def\ENT{\end{theorem}}
\def\BEP{\begin{proposition}}
\def\ENP{\end{proposition}}
\def\BEL{\begin{lemma}}
\def\ENL{\end{lemma}}
\def\BEC{\begin{corollary}}
\def\ENC{\end{corollary}}
\def\BEE{\begin{example}\rm}
\def\ENE{\end{example}}
\def\BER{\begin{remark} \rm}
\def\ENR{\end{remark}}
\def\BED{\begin{definition} \rm}
\def\END{\end{definition}}
\def\bea{\begin{eqnarray}}
\def\eea{\end{eqnarray}}
\def\bean{\begin{eqnarray*}}
\def\eean{\end{eqnarray*}}
\def\beq{\begin{equation}}
\def\eeq{\end{equation}}
\def\beal{\begin{align*}}
\def\eeal{ \end{align*} }
\def\rowpl{\nonumber \\ & \ \ &+  }
\def\bbB{{\mathbb B}}
\def\bbC{{\mathbb C}}
\def\bbD{{\mathbb D}}
\def\bbN{{\mathbb N}}
\def\bbQ{{\mathbb Q}}
\def\bbR{{\mathbb R}}
\def\cJ{{\mathcal J}}
\def\cK{{\mathcal K}}
\def\cQ{{\mathcal Q}}
\DeclareMathOperator{\supp}{supp}
\DeclareMathOperator{\diam}{diam}
\begin{document}

\title[Toeplitz operators on the unit ball]{Toeplitz operators on the unit ball with locally integrable symbols}

\author{R. Hagger}
\address{Department of Mathematics, University of Reading, England}
\email{r.t.hagger@reading.ac.uk}

\author{C. Liu}
\address{School of Mathematical Sciences, University of Science and Technology of China, Hefei, Anhui 230026, People's Republic of China}
\email{cwliu@ustc.edu.cn}

\author{J. Taskinen}
\address{Department of Mathematics and Statistics, University of Helsinki, Finland}
\email{jari.taskinen@helsinki.fi}

\author{J. A. Virtanen}
\address{Department of Mathematics, University of Reading, England}
\email{j.a.virtanen@reading.ac.uk}

\thanks{The first author was supported by the European Union’s Horizon 2020 research and innovation programme under the Marie Sklodowska-Curie grant agreement No 844451. The fourth author was supported in part by Engineering and Physical Sciences Research
Council (EPSRC) grant EP/T008636/1}

\keywords{Toeplitz operator, harmonic Bergman space, Bergman projection, boundedness, compactness}
\subjclass[2000]{47B35}

\begin{abstract}
We study the boundedness of Toeplitz operators $T_\psi$ with locally integrable
symbols on weighted harmonic Bergman spaces over the unit ball of $\bbR^n$.
Generalizing earlier results for analytic function spaces, we derive a general
sufficient condition for the boundedness of $T_\psi$ in terms of
suitable averages of its symbol. We also obtain a similar ``vanishing'' condition for compactness. Finally, we show how these results can be transferred to the setting of the standard weighted Bergman spaces of analytic functions.
\end{abstract}

\maketitle

\section{Introduction} \label{sec1.1}
Denote by $\mathrm{d}V$ the normalized $n$-dimensional Lebesgue measure on the unit ball $\bbB_n$ of $\bbR^n$ with $n \geq 2$. For $\lambda>-1$ and $p \geq 1$, let $b_\lambda^p = b_\lambda^p (\bbB_n)$ be the harmonic Bergman space consisting of all harmonic complex-valued functions in $L_\lambda^p=L^p(\bbB_n, \mathrm{d}V_\lambda)$, where $\mathrm{d}V_\lambda = c(n,\lambda) (1-|x|^2)^\lambda \mathrm{d}V$ and $c(n,\lambda)$ is a normalization constant (see \eqref{1.10}). We further write $b^p$ for $b_0^p$ and $L^p$ for $L^p_0$.

The Toeplitz operator $T_\psi$ with symbol $\psi$ is defined by
\bea
    T_\psi f = P_\lambda(\psi f),    \label{0.1}
\eea
where $P_\lambda$ is the orthogonal projection of $L_\lambda^2$ onto $b_\lambda^2$ (see  \eqref{1.15}), $\psi$ is a measurable  function on $\bbB_n$, and $f$ is harmonic in $\bbB_n$. It is known that $P_\lambda$ can be extended to a bounded projection from $L_\lambda^p$ onto $b_\lambda^p$ for $1<p<\infty$ (see \cite{JP}). It follows that $T_\psi$ is well defined and
bounded on  $b_\lambda^p$ whenever $\psi$ is bounded. For $f \in L_\lambda^1$, $P_{\lambda}f$ is still well defined as a function (see \eqref{1.15} below), but we may have $P_{\lambda}f \notin L_\lambda^1$ in general.

The study of $T_\psi$ on $b^p$ goes back to \cite{Miao2}, where it was proven for non-negative symbols $\psi \in L^1$ that $T_\psi$ is bounded (compact) on $b^p$ if and only if the averaging function
\bea
x \mapsto \frac{1}{|E_r(x)|}\int_{E_r(x)} \psi \, \mathrm{d}V  \label{1.1a}
\eea
is bounded on $\bbB_n$ (vanishes as $|x|\to 1$), where $E_r(x)=\{y\in\bbB_n : |y-x|<r(1-|x|)\}$ with $r \in (0,1)$ and $|E_r(x)|$ denotes its volume. Further, it was shown that for symbols $\psi$ continuous on $\overline\bbB_n$, $T_\psi$ is compact on $b^p$ if and only if $\psi=0$ on $\partial\bbB_n$, which was generalized to $b^2_\lambda$ in \cite{S1998}. It is easy to see that these conditions for boundedness (compactness) can also be formulated in terms of boundedness (vanishing) of the Berezin transform of $\psi$ or variants of the averaging function \eqref{1.1a}. Moreover, applying \eqref{1.1a} to the positive and negative parts of ${\rm Re}\, \psi $ and ${\rm Im}\, \psi$ shows that if
\begin{equation} \label{1.229}
\sup_{x \in \bbB_{n}} \frac{1}{|E_r(x)|}\int_{E_r(x)} |\psi| \, \mathrm{d}V < \infty,
\end{equation}
then $T_{\psi}$ is bounded. However, the modulus in the integrand makes \eqref{1.229} far from being necessary if the symbol is not positive. In this paper we propose averages over different sets, certain spherical boxes, but also with the modulus outside of the integral; see \eqref{eqn:setfunction}. In particular, we give a new, weaker sufficient condition for the boundedness and compactness of Toeplitz operators on the weighted harmonic Bergman spaces $b_\lambda^p$ in Theorems \ref{thm:mainthm} and
\ref{thm:mainthm2}. In Corollary \ref{main-cor}
we present the corresponding results in the case of weighted Bergman spaces
$A^p_\lambda$ of analytic functions on the unit ball of $\bbC^n$.

Most results about Toeplitz operators on $b_\lambda^p$ are generalizations from the
setting of analytic Bergman spaces $A^p_\lambda$ using similar but also additional
ideas related to non-analyticity. As in $b_\lambda^p$, characterizing bounded  Toeplitz operators is an open problem in $A^p(\bbD)$ (the unweighted Berg\-man space of the
unit disk $\bbD$ of the complex plane $\bbC$) even in the case
$p=2$. Compared with \eqref{1.229}, much more general and weaker sufficient conditions were found for Toeplitz operators on $A^p(\bbD)$ in \cite{TV2} with further improvements in \cite{TV2018}. Roughly speaking, the conditions resemble \eqref{1.229}, but the modulus appears outside the integral. This means that the modulus of a (wildly oscillating) symbol may be very large, but it may still induce a bounded operator due to cancellation in the integral; for an illustrative example, see~\cite{TV2018}. Recently, the same idea was used to introduce weak BMO and VMO type conditions in~\cite{arXiv:2106.11734}.
Here, we prove similar results for the space $b_\lambda^p$ of the unit ball with standard weights for the sake of maximal generality, which carries a number of technical challenges. We note that recently in~\cite{YZ2019} it was shown that the mentioned sufficient conditions in $A^p$  are not necessary for the boundedness. We also refer to \cite{YZ2019} for a concise account of the study of boundedness of Toeplitz operators on Bergman spaces.

As for the contents of this paper, the main results are formulated in
Section \ref{sec2Main} and their proofs are prepared in Sections \ref{sec1.2} and
\ref{sec2}, which contain some more necessary notation, definitions
and preliminary lemmas. The proofs of the main results are completed in Section
\ref{sec3}, and in Section \ref{sec9} we construct examples of radial, oscillating
symbols, the modulus of which may grow arbitrarily fast, but which still induce 
bounded Toeplitz operators.

\section{The main results}
\label{sec2Main}
In what follows we generalize the results of \cite{TV2, TV2018} from the case of
analytic  functions in $\bbD$ to the case of harmonic functions in $\bbB_n$ and also
consider weighted norms. Our analogous sufficient condition is a rather weak requirement of the boundedness of certain averages of $\psi$ over spherical boxes (see Theorem \ref{thm:mainthm}). We present the main results, Theorems~\ref{thm:mainthm}--\ref{thm:mainthm2} and Corollary~\ref{main-cor}, with a minimal amount of notation.

It seems likely that analogous results hold for little Hankel operators on $b_\lambda^p$, too, but we do not consider this question here; cf.~\cite{TV2018}.

Let $\bbQ_n := [0,1) \times [0,\pi]^{n-2} \times [0,2\pi]$ and $\sigma : \bbQ_n \to \bbB_n$ be defined by
\begin{align*}
&\sigma(r,\theta_2,\ldots,\theta_{n-1},\theta_n)\\
&\qquad \qquad \qquad = (r\cos\theta_2,r\sin\theta_2\cos\theta_3,r\sin\theta_2\sin\theta_3\cos\theta_4,\ldots,\\
&\qquad \qquad \qquad \qquad r\sin\theta_2\cdots\sin\theta_{n-1}\cos\theta_n,r\sin\theta_2\cdots\sin\theta_{n-1}\sin\theta_n).
\end{align*}
We note that $\sigma$ is surjective, and injective almost everywhere (i.e.~for almost every $x \in \bbQ_n$, $\sigma(x) = \sigma(y)$ implies $x = y$). For $x,y \in \bbQ_n$ we define the boxes
\[Q(x,y) := \big\{z \in \bbQ_n : z_j \in [\min\{x_j,y_j\},\max\{x_j,y_j\}]\big\}\]
and $B(x,y) := \sigma(Q(x,y))$. In order to distinguish between $B(x,y) \subset \bbB_n$ and $Q(x,y) \subset \bbQ_n$, we call $B(x,y)$ a spherical box. For the following, we need to single out certain spherical boxes. Let
\begin{align*}
\cK &:= \Big\{(m,k_2,k_3,\ldots,k_n) \in \mathbb{N}_0^n : 0 \leq k_n \leq k_{n-1} \leq \ldots \leq k_2 \leq 2^m-1\Big\}.
\end{align*}
For $k \in \cK$ we define
\begin{align*}
C_k &:= [1-2^{-m},1-2^{-m-1}] \times [\tfrac{\pi}{2} k_2 2^{-m},\tfrac{\pi}{2} (k_2+1) 2^{-m}] \times [\tfrac{\pi}{2}\tfrac{k_3}{k_2+1},\tfrac{\pi}{2}\tfrac{k_3+1}{k_2+1}]\\
&\qquad \times \ldots \times [\tfrac{\pi}{2}\tfrac{k_{n-1}}{k_{n-2}+1},\tfrac{\pi}{2}\tfrac{k_{n-1}+1}{k_{n-2}+1}] \times [2\pi\tfrac{k_n}{k_{n-1}+1},2\pi\tfrac{k_n+1}{k_{n-1}+1}].
\end{align*}
Note that $\bigcup\limits_{k \in \cK} C_k = [0,1) \times [0,\tfrac{\pi}{2}]^{n-2} \times [0,2\pi]$. If we add all the sets that can be obtained from the sets $C_k$ by repeatedly using the reflections $\theta_2 \mapsto \pi - \theta_2, \ldots, \theta_{n-1} \mapsto \pi - \theta_{n-1}$, we obtain a cover of $\bbQ_n$. Let $\cQ$ denote the collection of these sets. Enumerate the elements of $\cQ$ by $Q_1,Q_2,\ldots$ and define $B_j := \sigma(Q_j)$ for $j \in \bbN$. Then $\bigcup\limits_{j \in \bbN} B_j$ covers $\bbB_n$ and $B_j \cap B_k$ is a null set whenever $j \neq k$. If $|x| \in [1-2^{-m},1-2^{-m-1}]$ for all $x \in B_j$, then $B_j$ is called a dyadic box of generation $m$. For a dyadic box $B_j$ of generation $m$, we denote $B_j^* := B_j + 2^{-m-2}\bbB_n$. The collection of these spherical boxes $B_j$ has the property that they are all of size comparable to $2^{-mn}$ and there is a constant $N \in \bbN$ such that every $x \in \bbB_n$ is contained in at most $N$ of the sets $B_j^*$. We will prove these facts in Lemma \ref{lem:propertiesofQ} below.

We remark that it may be tempting and more natural to extend the decomposition of the unit disk in \cite{TV2} to higher dimensions using boxes of the form
\[C_k = [1-2^{-m},1-2^{-m-1}] \times \prod\limits_{j = 2}^n [2^{-m}k_j\pi,2^{-m}(k_j+1)\pi] \subset \bbQ_n,\]
where $k_j = 1,\ldots,2^m-1$ for $j = 2,\ldots,n-1$ and $k_n = 1,\ldots,2^{m+1}-1$. After mapping them to $\bbB_n$ and some reordering, these boxes also provide a countable, pairwise essentially disjoint cover $\{B_j : j \in \bbN\}$ of $\bbB_n$. However, it turns out that the size of these boxes is not always comparable to $2^{-mn}$. This is due to the fact that the Jacobian of $\sigma$ is singular if any of $\theta_2, \ldots, \theta_{n-1}$ is equal to $0$ or $\pi$. It also means that too many of the sets $B_j^*$ overlap close to the singular points. As these two properties are crucial for our analysis, we had to be more careful in the definition of the sets $B_j$. Having said that, it is highly expected that many other decompositions also lead to the same result. Our choice is not special in that regard, but it is somewhat natural as it leads to a definition of generalized Toeplitz operators that is independent of the decomposition; cf.~Theorem \ref{limit-thm}.

The set of locally integrable functions on $\bbB_n$ will be denoted by $L_{\rm loc}^1$. Note that $L_{\rm loc}^1$ does not depend on $\lambda$. The characteristic function of a measurable set $E \subset \bbB_n$ will be denoted by $\chi_E$.

\BED
\label{def:generalizedToeplitz}
Let $\psi \in L_{\rm loc}^1$, $1 < p < \infty$ and assume that the series
\begin{equation} \label{eqn:generalizedToeplitz}
T_\psi f(x) := \sum_{j=1}^\infty  T_\psi(\chi_{B_j}f)(x) = \sum_{j=1}^\infty  P_\lambda(\psi\chi_{B_j}f)(x)
\end{equation}
converges for almost every $x \in \bbB_n$ and all $f \in b_\lambda^p$. Then $T_\psi$ is called a \textit{generalized Toeplitz operator}.
\END

As $P_{\lambda}$ is bounded on $L_\lambda^p$, it is clear that $T_{\psi}f = P_{\lambda}(\psi f)$ whenever $\psi f \in L_\lambda^p$. In particular, if $\psi$ is bounded, then $T_{\psi}$ is just the usual Toeplitz operator as defined in the introduction. However, without any further assumptions it is a priori not even clear if $T_{\psi}f \in b_\lambda^p$. Our first main result is that $T_{\psi}$ is a well-defined bounded linear operator under the ``weak'' Carleson-type condition \eqref{eqn:weakCarleson}.

For any measurable set $B \subset \bbB_n$ we denote the weighted volume
by $|B|_\lambda := \int_B \, \mathrm{d}V_\lambda$. Moreover, $|B| := |B|_0$.

Let $x^{(j)}$ and $y^{(j)}$ denote the smallest, respectively largest, element of $Q_j$ with respect to the partial order
\begin{align} \label{eq:order}
x \leq y \Longleftrightarrow \, &x_1 \leq y_1, \, |\tfrac{\pi}{2} - x_2| \geq |\tfrac{\pi}{2} - y_2|, \, \ldots, \, |\tfrac{\pi}{2} - x_{n-1}| \geq |\tfrac{\pi}{2} - y_{n-1}|,\notag\\
&x_n \leq y_n
\end{align}
imposed on $\bbQ_n$.

\begin{remark}
Note that for $x,y \in [0,1) \times [0,\tfrac{\pi}{2}]^{n-2} \times [0,2\pi]$ this is just the usual partial order of points in $\bbR^n$, which is then mirrored to all of $\bbQ_n$ to account for the construction of the sets $Q_j$ and $B_j$. In particular, the $x^{(j)}$ and $y^{(j)}$ are two opposite corners of $Q_j$ and we have $B_j = B(x^{(j)},y^{(j)})$. In the following, we will only work in the subset $[0,1) \times [0,\tfrac{\pi}{2}]^{n-2} \times [0,2\pi]$ and then use that everything is symmetric around $\theta_2 \mapsto \pi - \theta_2, \ldots, \theta_{n-1} \mapsto \pi - \theta_{n-1}$. This simplifies the notation significantly and also means that we do not have to worry about the slightly unusual order \eqref{eq:order}.
\end{remark}

Given a function $\psi \in L_{\rm loc}^1$, we define
\begin{equation}\label{eqn:setfunction}
\widehat{\psi}_j :=  \sup_{y \in B_j} \bigg|\int\limits_{B(x^{(j)},y) } \psi \, \mathrm{d}V_{\lambda}\bigg| .
\end{equation}

\BET\label{thm:mainthm}
Let $\psi \in L_{\rm loc}^1$, $1<p<\infty$ and the family $(B_j)_{j \in \bbN}$ be as above. If there exists a constant $C_\psi \geq 0$ such that
\begin{equation}\label{eqn:weakCarleson}
\widehat{\psi}_j \leq C_\psi |B_j|_{\lambda}
\end{equation}
for all $j \in \bbN$, then the series \eqref{eqn:generalizedToeplitz} converges almost everywhere and in $L_\lambda^p$. Moreover, $T_\psi$ defines a bounded linear operator on $b_{\lambda}^p$ and there is a constant $C \geq 0$ independent of $\psi$ such that $\|T_{\psi}\| \leq CC_{\psi}$.
\ENT

For $\psi\in L_{\rm loc}^1$ and $0<\rho<1$, we define $\psi_\rho(z) = \psi(z)$ for $|z|\le \rho$ and $\psi_\rho(z) = 0$ for $\rho<|z|<1$. Note that $T_{\psi_\rho}$ is bounded on $b_\lambda^p$ by the previous theorem (see also Lemma \ref{lem:L1boundedness} below). Our next theorem gives an alternative definition of Toeplitz operators with $L^1_{\rm loc}$-symbol that does not depend on the decomposition $(B_j)_{j \in \bbN}$.

\begin{theorem}\label{limit-thm}
Let $1<p<\infty$ and $1/p+1/q=1$, and suppose that $\psi\in L_{\rm loc}^1$ satisfies \eqref{eqn:weakCarleson}. Then
\[T_\psi f = \lim_{\rho\to 1} T_{\psi_\rho} f\]
for all $f\in b_\lambda^p$ and $T_{\psi}^* : b_\lambda^q\to b_\lambda^q$ can be expressed as
$$
	T_{\psi}^*f = \lim_{\rho\to 1} T_{\overline \psi_\rho} f
$$
for $f\in b_\lambda^q$.
 \end{theorem}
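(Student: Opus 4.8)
\medskip
\noindent\textbf{Plan of proof.}
The idea is to deduce the first identity from the estimate $\|T_\psi f-T_{\psi_\rho}f\|_{L_\lambda^p}\to 0$, and then to obtain the formula for $T_\psi^{*}$ by first showing that $T_\psi^{*}=T_{\overline\psi}$ and applying the first part of the theorem to $\overline\psi$.

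First I would record two elementary facts about the truncations. Intersecting a spherical box $B(x^{(j)},y)$, $y\in Q_j$, with the ball $\{|z|\le\rho\}$ or with its complement affects only the radial variable in $\bbQ_n$; hence the result is (up to null sets) $B(x^{(j)},y)$ itself, the empty set, a box $B(x^{(j)},y')$ with $y'\in Q_j$, or a difference $B(x^{(j)},y)\setminus B(x^{(j)},y'')$ of two such boxes. Consequently $\psi_\rho$ and $\psi-\psi_\rho$ lie in $L_{\rm loc}^1$ and satisfy \eqref{eqn:weakCarleson} with constants $C_\psi$ and $2C_\psi$, respectively, uniformly in $\rho$, so by Theorem \ref{thm:mainthm} both are bounded generalized Toeplitz operators on $b_\lambda^p$. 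Since $P_\lambda$ is linear on $L_\lambda^1$ and $\psi\chi_{B_j}f$ and $\psi_\rho\chi_{B_j}f$ belong to $L_\lambda^1$, comparing the partial sums of the three series of type \eqref{eqn:generalizedToeplitz} --- all convergent in $L_\lambda^p$ by Theorem \ref{thm:mainthm} --- gives $T_\psi f-T_{\psi_\rho}f=T_{\psi-\psi_\rho}f$ for every $f\in b_\lambda^p$. It thus suffices to prove that $\|T_{\psi-\psi_\rho}f\|_{L_\lambda^p}\to 0$ as $\rho\to 1$.

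For this I would revisit the proof of Theorem \ref{thm:mainthm}. After the integration by parts over the boxes, the use of \eqref{eqn:weakCarleson}, and the sub-mean value property of harmonic functions, that proof controls $|T_\phi f(x)|$ by $C\,C_\phi$ times $\mathcal H\bigl(\chi_{W(\phi)}\,\widetilde f\bigr)(x)$, where $\widetilde f$ is a local-supremum majorant of $|f|$ (so $\|\widetilde f\|_{L_\lambda^p}\lesssim\|f\|_{L_\lambda^p}$), $\mathcal H$ is a fixed positive integral operator bounded on $L_\lambda^p$, and $W(\phi)$ is a fixed neighbourhood of $\supp\phi$ (the union of the enlarged boxes $B_j^{*}$ that meet $\supp\phi$). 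The bound $\|T_{\psi-\psi_\rho}\|\le 2CC_\psi$ furnished directly by the \emph{statement} of Theorem \ref{thm:mainthm} is not enough, but this localized form is: since $\supp(\psi-\psi_\rho)\subseteq\{|z|>\rho\}$, we have $W(\psi-\psi_\rho)\subseteq\{|z|>\rho'\}$ with $\rho'=\rho'(\rho)\to 1$, hence $|T_{\psi-\psi_\rho}f|\le 2C\,C_\psi\,\mathcal H\bigl(\chi_{\{|z|>\rho'\}}\widetilde f\bigr)$, and since $\chi_{\{|z|>\rho'\}}\widetilde f\to 0$ in $L_\lambda^p$ by dominated convergence (as $\widetilde f\in L_\lambda^p$), so does $\mathcal H\bigl(\chi_{\{|z|>\rho'\}}\widetilde f\bigr)$. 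Therefore $\|T_{\psi-\psi_\rho}f\|_{L_\lambda^p}\to 0$, which is the first assertion. I expect distilling this localized estimate to be the only step requiring care; concretely, the difficulty is confined to the single dyadic generation straddling $\{|z|=\rho\}$, since the contribution of the strictly finer generations is just a tail of the $L_\lambda^p$-convergent series $\sum_m T_{\Psi_m}f=T_\psi f$, $\Psi_m:=\psi\chi_{\{1-2^{-m}\le|z|\le 1-2^{-m-1}\}}$.

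For the adjoint, recall that $(b_\lambda^p)^{*}\cong b_\lambda^q$ under the pairing $\langle f,g\rangle_\lambda:=\int_{\bbB_n}f\overline g\,\mathrm{d}V_\lambda$. Conjugation does not change the modulus in \eqref{eqn:setfunction}, so $\overline\psi\in L_{\rm loc}^1$ satisfies \eqref{eqn:weakCarleson} with the same constant $C_\psi$, and $T_{\overline\psi}$ is a bounded generalized Toeplitz operator on $b_\lambda^q$ by Theorem \ref{thm:mainthm}. Fix $f\in b_\lambda^p$ and $g\in b_\lambda^q$. Each $\psi\chi_{B_j}f$ lies in $L_\lambda^1$ --- it is supported on the compact set $\overline{B_j}\subset\bbB_n$, where $\psi$ is integrable and $f$ is bounded --- and $R_\lambda(\,\cdot\,,w)$ is bounded on $\bbB_n$ uniformly for $w\in\overline{B_j}$, so Fubini's theorem, the self-adjointness of $P_\lambda$, and $P_\lambda g=g$ give $\langle P_\lambda(\psi\chi_{B_j}f),g\rangle_\lambda=\langle\psi\chi_{B_j}f,g\rangle_\lambda=\int_{B_j}\psi f\overline g\,\mathrm{d}V_\lambda$. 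Summing over $j$ and using the continuity of $\langle\,\cdot\,,g\rangle_\lambda$ on $L_\lambda^p$ and the convergence of \eqref{eqn:generalizedToeplitz},
\[
\langle T_\psi f,g\rangle_\lambda=\sum_{j=1}^{\infty}\int_{B_j}\psi f\overline g\,\mathrm{d}V_\lambda .
\]
Carrying out the same computation with $(\overline\psi,g,f)$ in place of $(\psi,f,g)$ (and $p,q$ interchanged) and then conjugating yields $\overline{\langle T_{\overline\psi}g,f\rangle_\lambda}=\sum_{j}\int_{B_j}\psi f\overline g\,\mathrm{d}V_\lambda$, so $\langle T_\psi f,g\rangle_\lambda=\langle f,T_{\overline\psi}g\rangle_\lambda$ for all such $f,g$; that is, $T_\psi^{*}=T_{\overline\psi}$ on $b_\lambda^q$. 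Finally, applying the first part of the theorem to $\overline\psi$ --- which satisfies \eqref{eqn:weakCarleson} --- and noting $(\overline\psi)_\rho=\overline{\psi_\rho}$, we conclude $T_\psi^{*}f=T_{\overline\psi}f=\lim_{\rho\to 1}T_{\overline{\psi_\rho}}f$ in $b_\lambda^q$.
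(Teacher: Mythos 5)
Your proof is correct and complete in outline, but its first half follows a genuinely different route from the paper's. The paper exploits the monotonicity $(\widehat{\psi_\rho})_j \le \widehat\psi_j$ to get a $\rho$-independent $L_\lambda^p$-majorant for $\sum_j |T_{\psi_\rho}(\chi_{B_j}f)(x)|$ from \eqref{eqn:absconvergence}; by the Weierstrass $M$-test the series converges absolutely and uniformly in $\rho$, so the limit $\rho\to 1$ can be taken termwise (each term stabilizes, since $\psi_\rho\chi_{B_j}=\psi\chi_{B_j}$ once $\rho>\sup_{z\in B_j}|z|$), giving pointwise a.e.\ convergence $T_{\psi_\rho}f(x)\to T_\psi f(x)$; dominated convergence against the same $L_\lambda^p$-majorant then upgrades this to $L_\lambda^p$-convergence. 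You instead write $T_\psi f - T_{\psi_\rho}f = T_{\psi-\psi_\rho}f$ (correct, since all three series from \eqref{eqn:generalizedToeplitz} converge in $L_\lambda^p$ by Theorem \ref{thm:mainthm} and $P_\lambda$ is linear) and then localize the argument of Theorem \ref{thm:mainthm}: $\psi-\psi_\rho$ is supported in $\{|z|>\rho\}$, so only boxes $B_j$ with $B_j\cap\{|z|>\rho\}\neq\emptyset$ contribute, for which the enlarged boxes $B_j^{*}$ lie in an annulus $\{|z|>\rho'\}$ with $\rho'\to 1$; the right-hand side of \eqref{eqn:absconvergence} then runs only over this thin annulus, and its $L_\lambda^p$-norm tends to $0$ by Lemmas \ref{lem:intgop} and \ref{lem:normdom}. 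Both arguments rest on the same machinery (Lemma \ref{lem3.2v} plus Lemmas \ref{lem:intgop}, \ref{lem:normdom}); yours is a norm estimate throughout and incidentally quantitative once the map $\rho\mapsto\rho'$ is made explicit, while the paper's is shorter and does not need to track the straddling generation. Two small inaccuracies worth tidying in your write-up: the majorant you describe as a "local-supremum majorant $\widetilde f$ of $|f|$" is actually $\sum_{|\beta|\le n}w^{|\beta|}|D^\beta f|$, a weighted sum of derivatives rather than a maximal function (its relevant property, $\|\cdot\|_{p,\lambda}\lesssim\|f\|_{p,\lambda}$, is Lemma \ref{lem:normdom}); and the bound $\widehat{(\psi-\psi_\rho)}_j\le 2\widehat\psi_j$ deserves a one-line justification, namely that intersecting $B(x^{(j)},y)$ with $\{|z|>\rho\}$ gives a set difference of two nested spherical boxes, whence the factor $2$. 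Your treatment of the adjoint coincides with the paper's.
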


Here, $T_{\psi}^*$ denotes the adjoint of the operator $T_{\psi}$ with respect to the standard duality of $b_\lambda^p$-spaces.

Concerning the compactness of the Toeplitz operator, we formulate the result in terms of the corresponding vanishing weak Carleson condition.

\BET\label{thm:mainthm2} Let the symbol $\psi$ satisfy the assumptions of Theorem \ref{thm:mainthm} and in addition the condition
\begin{equation}\label{eqn:weakvanishingCarleson}
\lim_{j \to \infty} \frac {\widehat{\psi}_j}{|B_j|_{\lambda}}=0.
\end{equation}
Then $T_{\psi}:b_{\lambda}^p\to b_{\lambda}^p$ is compact for all  $1<p<\infty$.
\ENT

We emphasize that these sufficient conditions do not concern the modulus of the symbol $\psi$, only that of the integral.

Finally, we consider Toeplitz operators, denoted by $T_\psi^{\rm an}$, in the case of the weighted
Bergman spaces $A_\lambda^p= A_\lambda^p ({\bf B}_n)$ of the open unit ball ${\bf B}_n$ of $\bbC^n$. Here, $\bbC^n $ is identified in the canonical way with $\bbR^{2n}$,
hence ${\bf B}_n = \bbB_{2n}$, and the Bergman space $A_\lambda^p
$ is the closed subspace of $L_\lambda^p(\bbB_{2n})$
consisting of analytic functions of $n$ complex  variables in the domain
${\bf B}_n$. The Toeplitz operator $T_\psi^{\rm an}$ with symbol $\psi \in L_{\rm loc}^1(\bbB_{ 2n})$
is defined as in \eqref{0.1} by replacing the projection $P_\lambda$ by the
orthogonal projection $P_\lambda^{\rm an}$ from $L_\lambda^p(\bbB_{2n})$ onto
$A_\lambda^p ({\bf B}_n)$, and the generalized Toeplitz operator corresponding
to \eqref{eqn:generalizedToeplitz} is defined analogously.

\begin{corollary}\label{main-cor}
Let the assumptions of Theorem \ref{thm:mainthm}  (respectively, Theorem
\ref{thm:mainthm2}) be valid for the symbol $\psi$  in the domain $\bbB_{2n}$, $n \in \bbN$.
Then,  the Toeplitz operator
$T_\psi^{\rm an}: A_\lambda^p({\bf B}_n) \to A_\lambda^p({\bf B}_n)$ is bounded
(resp. compact). The  statement corresponding to Theorem \ref{limit-thm}
is also valid in the space  $A_\lambda^p({\bf B}_n)$.
\end{corollary}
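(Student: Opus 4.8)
The plan is to obtain Corollary~\ref{main-cor} by transferring Theorems~\ref{thm:mainthm}, \ref{limit-thm} and \ref{thm:mainthm2} from the harmonic setting on $\bbB_{2n}$ to the analytic setting on ${\bf B}_n$ via the inclusion $A_\lambda^p({\bf B}_n) \subset b_\lambda^p(\bbB_{2n})$. Since every holomorphic function of $n$ complex variables is harmonic as a function on $\bbR^{2n}$ (harmonic with respect to the full Laplacian, being pluriharmonic), the analytic Bergman space sits isometrically inside the harmonic Bergman space, both carrying the same $L_\lambda^p(\bbB_{2n})$ norm. The key structural fact I would invoke is that the analytic Bergman projection $P_\lambda^{\rm an}$ factors through the harmonic one: $P_\lambda^{\rm an} = R \circ P_\lambda$, where $R$ is the (bounded, for $1<p<\infty$) projection of $b_\lambda^p$ onto $A_\lambda^p$. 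Equivalently, since $A_\lambda^p \subset b_\lambda^p$ is complemented and $P_\lambda^{\rm an}$ restricted to $b_\lambda^p$ is precisely that complementation projection, one has $P_\lambda^{\rm an} = P_\lambda^{\rm an} P_\lambda$ on $L_\lambda^p$. This is standard, but I would state it as a lemma with a one-line proof: for $f \in L_\lambda^p$ and any $g \in A_\lambda^q \subset b_\lambda^q$, $\langle P_\lambda^{\rm an} f, g\rangle = \langle f, g\rangle = \langle P_\lambda f, g\rangle = \langle P_\lambda^{\rm an} P_\lambda f, g\rangle$, using self-adjointness of the relevant $L^2$-projections and density.

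From this factorization the boundedness part is immediate. Given $\psi \in L^1_{\rm loc}(\bbB_{2n})$ satisfying \eqref{eqn:weakCarleson}, Theorem~\ref{thm:mainthm} gives that the harmonic generalized Toeplitz operator $T_\psi = \sum_j P_\lambda(\psi\chi_{B_j}\,\cdot\,)$ converges a.e.\ and in $L_\lambda^p$ and is bounded on $b_\lambda^p$ with $\|T_\psi\| \le C C_\psi$. For $f \in A_\lambda^p \subset b_\lambda^p$, applying the bounded operator $P_\lambda^{\rm an}$ termwise (legitimate because the partial sums converge in $L_\lambda^p$ and $P_\lambda^{\rm an}$ is continuous on $L_\lambda^p$) yields
\[
T_\psi^{\rm an} f = \sum_{j=1}^\infty P_\lambda^{\rm an}(\psi\chi_{B_j} f) = \sum_{j=1}^\infty P_\lambda^{\rm an} P_\lambda(\psi\chi_{B_j} f) = P_\lambda^{\rm an}\!\left(\sum_{j=1}^\infty P_\lambda(\psi\chi_{B_j} f)\right) = P_\lambda^{\rm an}(T_\psi f),
\]
so $T_\psi^{\rm an} = P_\lambda^{\rm an} \circ T_\psi|_{A_\lambda^p}$ is bounded on $A_\lambda^p$ with norm at most $\|P_\lambda^{\rm an}\|\,\|T_\psi\| \le C' C_\psi$. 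The compactness statement follows the same way: under the additional hypothesis \eqref{eqn:weakvanishingCarleson}, Theorem~\ref{thm:mainthm2} makes $T_\psi$ compact on $b_\lambda^p$; composing with the bounded operator $P_\lambda^{\rm an}$ on the left and the bounded inclusion $A_\lambda^p \hookrightarrow b_\lambda^p$ on the right preserves compactness, so $T_\psi^{\rm an}$ is compact on $A_\lambda^p$.

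For the statement corresponding to Theorem~\ref{limit-thm}, note that the truncations $\psi_\rho$ commute with the restriction-then-factor construction: $T_{\psi_\rho}^{\rm an} = P_\lambda^{\rm an} T_{\psi_\rho}|_{A_\lambda^p}$ by the same identity (here $\psi_\rho f \in L^p_\lambda$, so this is just the ordinary Toeplitz operator). Theorem~\ref{limit-thm} gives $T_{\psi_\rho} f \to T_\psi f$ in $b_\lambda^p = L_\lambda^p$-norm for $f \in A_\lambda^p \subset b_\lambda^p$; applying the continuous operator $P_\lambda^{\rm an}$ gives $T_{\psi_\rho}^{\rm an} f \to T_\psi^{\rm an} f$. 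The adjoint formula is obtained either by the identical argument with $\overline{\psi_\rho}$ and $q$ in place of $p$, or by observing that the $b_\lambda^p$–$b_\lambda^q$ duality restricts to the $A_\lambda^p$–$A_\lambda^q$ duality, so that $(T_\psi^{\rm an})^* = R^* (T_\psi|_{b})^* \iota^*$ unwinds to $P_\lambda^{\rm an} T_{\overline\psi}|_{A_\lambda^q}$, which is $\lim_{\rho\to 1} T_{\overline{\psi_\rho}}^{\rm an}$.

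The only genuinely non-routine point is the factorization lemma $P_\lambda^{\rm an} = P_\lambda^{\rm an}P_\lambda$, i.e.\ the compatibility of the two Bergman projections; everything else is a soft functional-analytic transfer. I would make sure to justify the density/duality step in that lemma carefully (using that polynomials, or bounded analytic functions, are dense in $A_\lambda^q$ for $1<q<\infty$, and that the $L^2_\lambda$-reproducing kernel identities extend), and to note explicitly that the generalized-Toeplitz series for $T_\psi^{\rm an}$ inherits its a.e.\ and $L_\lambda^p$ convergence directly from that of the harmonic series via continuity of $P_\lambda^{\rm an}$. I expect the main obstacle, if any, to be purely expository: stating precisely in what sense $T_\psi^{\rm an}$ was defined ``analogously'' in the paragraph preceding the corollary and checking that definition coincides with $P_\lambda^{\rm an} \circ T_\psi|_{A_\lambda^p}$.
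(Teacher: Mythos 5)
Your overall strategy matches the paper's: identify $A_\lambda^p({\bf B}_n)$ as a subspace of $b_\lambda^p(\bbB_{2n})$, establish the compatibility $P_\lambda^{\rm an} = P_\lambda^{\rm an}P_\lambda$ of the two projections, and deduce $T_\psi^{\rm an} = P_\lambda^{\rm an}T_\psi|_{A_\lambda^p}$, after which boundedness, compactness, and the truncation-limit result all transfer by soft functional analysis. That part is correct.

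However, there is a genuine gap in the factorization lemma as you have formulated it. You prove $P_\lambda^{\rm an}f = P_\lambda^{\rm an}P_\lambda f$ for $f \in L_\lambda^p$ (via $L^p$–$L^q$ duality against $A_\lambda^q$), but in the chain of equalities
\[
T_\psi^{\rm an} f = \sum_{j} P_\lambda^{\rm an}(\psi\chi_{B_j} f) = \sum_{j} P_\lambda^{\rm an} P_\lambda(\psi\chi_{B_j} f)
\]
the functions $\psi\chi_{B_j}f$ to which the identity is applied lie only in $L^1$ with compact support, \emph{not} in $L_\lambda^p$, because $\psi$ is merely $L^1_{\rm loc}$ (while $f$ is bounded on $\overline{B_j}$). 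So your lemma, as stated, does not cover the case you need. The paper closes exactly this gap: it first proves the factorization for compactly supported $L^1_\lambda$-functions $f$, by taking $f_n \in L^2_\lambda$ with the same support, $\|f - f_n\|_{1,\lambda} \to 0$, and using Lemma \ref{lem:L1boundedness} (and its analytic analogue) to obtain $\|P_\lambda(f-f_n)\|_{p,\lambda} \lesssim \|f-f_n\|_{1,\lambda}$ and $\|P_\lambda^{\rm an}(f-f_n)\|_{p,\lambda} \lesssim \|f-f_n\|_{1,\lambda}$; then the $L^2$-identity $P_\lambda^{\rm an}f_n = P_\lambda^{\rm an}P_\lambda f_n$ passes to the limit. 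You allude to needing care with ``the density/duality step,'' but the specific issue — that the relevant functions are $L^1$ with compact support rather than $L^p$, and that one needs an $L^1 \to L^p$ estimate for both projections on compactly supported data to bridge the gap — is not identified in your sketch and is precisely the non-routine content of the paper's proof.

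Once that is repaired, the rest of your argument (termwise application of the continuous $P_\lambda^{\rm an}$ to the $L^p$-convergent series, transfer of compactness by composition with bounded maps, and the transfer of Theorem \ref{limit-thm} including the adjoint formula) is sound and essentially what the paper does.
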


The proofs of the main results will be given in Section \ref{sec3}.

\section{Further notation and definitions}\label{sec1.2}
By $C$, $c$, $C'$ etc.~we mean  positive constants
which may vary from place to place. If the constant depends on some
parameter, say $n$, this is shown as $C(n)$.
For positive valued expressions $f$ and $g$ depending on some variables or
parameters, the notation $f \approx g$ (respectively,
$f \lesssim g$) means the existence of
constants $c, C $ such that $c f \leq g \leq Cf$ (resp. $ f \leq C g$ )
for all values of the
variables or parameters.
By $E(a,r)\subset \bbR^n$ we denote the open Euclidean ball with center $a \in \bbR^n$ and radius $r>0$.

On $\bbB_n$ we define the standard weight function
\bea
\label{1.4}
w(x) = 1 - |x|^2 .
\eea
Given $\lambda>-1$
and $1\leq p<\infty$, the norm of the weighted space $L_\lambda^p$
and harmonic Bergman space $b_{\lambda}^p \subset L_\lambda^p$
is defined by
\[
\|f\|_{p,\lambda}:= \left(\int_{\bbB_n} |f|^p \, \mathrm{d}V_{\lambda}\right)^{1/p} ,
\]
where
\begin{equation}
\label{1.10}
\mathrm{d}V_{\lambda}
:= c(n,\lambda) w^\lambda \, \mathrm{d}V
:=  \frac {2}{n}\cdot \frac {\Gamma(n/2+\lambda+1)}{\Gamma(n/2)\Gamma(\lambda+1)} w^{\lambda} \, \mathrm{d}V  .
\end{equation}
The orthogonal
projection $P_{\lambda}$ from  $L_{\lambda}^2$  onto $b_{\lambda}^2$
(the harmonic Bergman projection)
can be expressed as an integral operator
\bea
P_{\lambda}f(x)=\int\limits_{\bbB_n} f(y) R_{\lambda}(x,y) \, \mathrm{d}V_{\lambda}(y),
\label{1.15}
\eea
where, for $x,y\in \bbB_n$,
\[
R_{\lambda}(x,y)=\frac {\Gamma(n/2)}{\Gamma(n/2+\lambda+1)} \sum_{k=0}^{\infty} \frac {\Gamma(k+n/2+\lambda+1)}{\Gamma(k+n/2)}Z_k(x,y)
\]
and $Z_k (x,y)$ denote the extended zonal harmonics of order $k$. We refer to \cite[Chapter 5]{ABR}
for the definition of these functions  and related facts. This series converges absolutely and uniformly on $K \times \overline{\bbB}_n$ for every compact set $K\subset {\bbB_n}$; see \cite[Proposition 2.6]{NT}. In particular, $R_{\lambda}$ is a smooth bounded function on $K \times \bbB_n$ and also on $\bbB_n \times K$ by symmetry. For $1 < p < \infty$ the boundedness of $P_\lambda : L_{\lambda}^p \to b_{\lambda}^p$, defined as in \eqref{1.15}, is proven in \cite[Theorem 3.1]{JP}.

The following pointwise estimate follows directly from the mean value property of harmonic functions.

\BEL\label{lem:pointwiseest}
Let $\lambda>-1$, $1\leq p<\infty$, and $f\in b_{\lambda}^p$. Then
\[
|f(x)| ~\lesssim~ \frac {\|f\|_{p,\lambda}}{w(x)^{(n+\lambda)/p}}
\]
for all $x\in \bbB_n$.
\ENL

For compactly supported symbols we have the following result, which is well known to experts; however, we include its proof for completeness because we do not know a reference.

\BEL
\label{lem:L1boundedness}
Let $\psi \in L_\lambda^1$ have compact support, that is, $\supp \psi \subseteq r\overline{\bbB_n}$ for some $r < 1$. Then, $P_{\lambda}\psi$ is harmonic and there is a constant $C = C(r)$ such that
\[\|P_{\lambda}\psi\|_{p,\lambda} \leq C\|\psi\|_{1,\lambda}\]
for all $1 < p < \infty$. In particular, $T_\psi$ is well-defined and bounded on $b_{\lambda}^p$.
\ENL

\begin{proof}
We have
\[|P_{\lambda}\psi(x)| \leq \int\limits_{r\overline{\bbB}_n} |\psi(y)||R_{\lambda}(x,y)| \, \mathrm{d}V_{\lambda}(y) \leq C\int\limits_{r\overline{\bbB}_n} |\psi(y)| \, \mathrm{d}V_{\lambda}(y) = C\|\psi\|_{1,\lambda}\]
for all $x \in \bbB_n$ because $R_{\lambda}(x,y)$ is bounded on $\bbB_n \times r\overline{\bbB}_n$ as noted above. This implies that $P_{\lambda}\psi$ is harmonic and $\|P_{\lambda}\psi\|_{p,\lambda} \leq C\|\psi\|_{1,\lambda}$ for all $p$.

By Lemma \ref{lem:pointwiseest}, $|f(y)| \leq \tilde{C}(r) \|f\|_{p,\lambda}$ for $|y| \leq r$. The same estimate as above thus yields the boundedness of $T_\psi$.
\end{proof}

The maximal harmonic Bergman projection $P_\lambda^M$ is the nonlinear operator
\bea
P_\lambda^M  f(x) =   \int\limits_{\bbB_n} |f(y)| |R_{\lambda}(x,y)| \, \mathrm{d}V_{\lambda}(y).\notag
\eea
For $1<p<\infty$ this is a well-defined mapping $L_\lambda^p \to L_\lambda^p$, and it is also bounded in the sense that, for some constant
\bea
\Vert P_\lambda^M f \Vert_{p,\lambda} \leq C \Vert f \Vert_{p,\lambda}
\label{1.15N}
\eea
for $f \in L_\lambda^p$; see again \cite{JP}.

Recall that  for each $a\in {\bbB_n}$, the M\"obius transformation $\varphi_a :{\bbB_n} \to {\bbB_n}$
is defined by the formula
\begin{equation}\label{1.15a}
\varphi_a(x)=\frac {|x-a|^2 a - (1-|a|^2)(x-a)} {[x,a]^2},
\end{equation}
where
\bea
[x,a]:=(1-2x\cdot a +|x|^2|a|^2)^{1/2}.   \label{1.15b}
\eea
As is well known, $\varphi_a$ is an automorphism (analytic bijection) of ${\bbB_n}$ onto itself, which maps the point $a$ to the origin, and it is also an involution, i.e., $\varphi_a\circ \varphi_a (z) = z$ for all $z \in {\bbB_n}$.

The technical challenge of our paper arises from the decomposition of
the unit ball into spherical boxes and a tricky integration by parts
argument. Accordingly, it is important to introduce suitable  combinatorial
notation. First, we will use  standard multi-index notation so that for a multi-index $\alpha = (\alpha_1, \ldots,
\alpha_n) \in \bbN_0^n$, we denote  $|\alpha| := \alpha_1 + \ldots + \alpha_n$,
and if
$\beta = (\beta_1, \ldots, \beta_n) \in \bbN_0^n$, then $\beta \leq \alpha$
means that $\beta_k \leq \alpha_k$ for all $k$. Given a multi-index
$\alpha$, the corresponding partial differential operator
acting on functions with $n$ real variables   is  defined by
\bea
D^\alpha  = \partial_1^{\alpha_1} \ldots \partial_n^{\alpha_n},\notag
\eea
where $\partial_k = \partial / \partial x_k$ for all $k$. Differentiation will be
performed both in Cartesian and spherical coordinates in the following.

Moreover, in order to perform the integration by parts, we need some notation for certain parts of the boundary. For $x,y \in \bbQ_n$ and multi-indices $\alpha \in \{ 0,1 \}^n$, let
\begin{equation} \label{1.22}
Q_{\alpha}(x,y) = \prod\limits_{\substack{k = 1,\ldots,n \\ \alpha_k = 1}} [x_k,y_k] \times \prod\limits_{\substack{k = 1,\ldots,n \\ \alpha_k = 0}} \{y_k\}.
\end{equation}
For $z \in Q(x,y)$ we denote by $z_{\alpha}$ the element in $Q(x,y)$ with coordinates
\begin{equation} \label{1.50}
(z_{\alpha})_k = \begin{cases} z_k & \text{if } \alpha_k = 1, \\ y_k & \text{if } \alpha_k = 0. \end{cases}
\end{equation}

\section{Preparatory lemmas.}
\label{sec2}

\subsection{Remarks concerning the hyperbolic metric}
\label{sec2.1}
The following facts about M\"obius transforms $\varphi_a$ are well known and easy to check from the definitions, cf. \eqref{1.4}, \eqref{1.15a}, \eqref{1.15b}.

\BEL\label{lem:phi_a}
Let $a\in {\bbB_n}$. The identities
\begin{equation}\label{2.1a}
1-|\varphi_a(x)|^2 = \frac {w(a)w(x)} {[x,a]^2}
\ \ \mbox{and} \ \ \left|\varphi_a^{\prime}(x)\right|=\frac {w(a) }{[x,a]^2}
\end{equation}
hold for every $x\in {\bbB_n}$. Moreover, for all  $x,y\in {\bbB_n}$,
\begin{equation}\label{eqn:composition}
[\varphi_a(x),\varphi_a(y)]^2= \left|\varphi_a^{\prime}(x)\right|\left|\varphi_a^{\prime}(y)\right|[x,y]^2.
\end{equation}
\ENL

The distance of two points
$a,b\in {\bbB_n}$ in the hyperbolic (or Poincar\'e) metric  is given by
\[
d(a,b)= \log \left(\frac {1+|\varphi_a(b)|}{1-|\varphi_a(b)|}\right),
\]
and, in particular,
$
d(0,a)=\log \big( (1+|a|)/(1-|a|)\big) .
$

\BEL\label{lem:controlledbymetric}
Let $a,b\in {\bbB_n}$. Then
\[e^{-d(a,b)} ~\leq~ \frac {[x,a]}{[x,b]} ~\leq~ e^{d(a,b)}\]
for all $x\in {\bbB_n}$.
\ENL

\begin{proof}
We write $x=\varphi_{a} (y)$ and $b=\varphi_{a}(c)$. By \eqref{eqn:composition}, we have
\begin{align*}
\frac {[x,a]^2}{[x,b]^2} = \frac {[\varphi_a (y),\varphi_a (0)]^2}{[\varphi_a (y),\varphi_a (c)]^2}
= \frac {|\varphi_a^{\prime}(y)||\varphi_a^{\prime}(0)|[y,0]^2} {|\varphi_a^{\prime}(y)|
|\varphi_a^{\prime}(c)|[y,c]^2} = \frac {[a,c]^2}{[y,c]^2}
\end{align*}
using \eqref{2.1a} for $|\varphi_a^{\prime}(0)|$ and $|\varphi_a^{\prime}(c)|$. Since $c=\varphi_a(b)$, we have
\[
\frac {[x,a]}{[x,b]} = \frac {[a,c]}{[y,c]} \leq \frac {1+|c|}{1-|c|}= \frac {1+|\varphi_a(b)|}{1-|\varphi_a(b)|}=e^{d(a,b)}.
\]
The other inequality follows from interchanging the roles of $a$ and $b$.
\end{proof}

Next we prove some crucial properties of the dyadic boxes $B_j$ and their enlargements $B_j^*$. Recall that the $B_j$ are essentially disjoint and $B_j^* = B_j + 2^{-m-2}\bbB_n$.

\BEL\label{lem:propertiesofQ}
Let $B_j$ be a dyadic box of generation $m$. Then
\begin{enumerate}
\item[\rm(i)]
$|B_j| \approx 2^{-mn}$,
\item[\rm(ii)]
$\diam(B_j) \approx 2^{-m}$,
\item[\rm(iii)]
$|B_j^*| \approx 2^{-mn}$,
\item[\rm(iv)]
there is a constant $N \in \bbN$ such that every $x \in \bbB_n$ is contained in at most $N$ of the sets $B_j^*$,
\item[\rm(v)]
$1-|x|\approx 1-|x|^2 \approx 2^{-m}$ whenever $x\in B_j^*$,
\item[\rm(vi)]
$[x,a]\approx [x,b]$ for all $x\in {\bbB_n}$ and $a,b\in B_j^*$.
\end{enumerate}
\ENL

\begin{proof}
Without loss of generality, we may always assume that $B_j = \sigma(C_k)$ for some $k \in \cK$.

(i) A direct computation gives
\begin{align*}
|B_j| &= \int_{1-2^{-m}}^{1-2^{-m-1}} \int_{\tfrac{\pi}{2} k_2 2^{-m}}^{\tfrac{\pi}{2} (k_2+1) 2^{-m}} \ldots \int_{\tfrac{\pi}{2}\tfrac{k_{n-1}}{k_{n-2}+1}}^{\tfrac{\pi}{2}\tfrac{k_{n-1}+1}{k_{n-2}+1}} \int_{2\pi\tfrac{k_n}{k_{n-1}+1}}^{2\pi\tfrac{k_n+1}{k_{n-1}+1}}\\
&\qquad \qquad \qquad \qquad \qquad \qquad r^{n-1} \sin^{n-2}\theta_2 \cdots \sin\theta_{n-1} \, \mathrm{d}\theta_n \cdots \, \mathrm{d}\theta_2 \, \mathrm{d}r\\
&\approx \int_{1-2^{-m}}^{1-2^{-m-1}} \int_{\tfrac{\pi}{2} k_2 2^{-m}}^{\tfrac{\pi}{2} (k_2+1) 2^{-m}} \ldots \int_{\tfrac{\pi}{2}\tfrac{k_{n-1}}{k_{n-2}+1}}^{\tfrac{\pi}{2}\tfrac{k_{n-1}+1}{k_{n-2}+1}} \int_{2\pi\tfrac{k_n}{k_{n-1}+1}}^{2\pi\tfrac{k_n+1}{k_{n-1}+1}}\\
&\qquad \qquad \qquad \qquad \qquad \qquad r^{n-1}\theta_2^{n-2} \cdots \theta_{n-1} \, \mathrm{d}\theta_n \cdots \, \mathrm{d}\theta_2 \, \mathrm{d}r\\
&\approx \left((1-2^{-m-1})^n - (1-2^{-m})^n\right)\left(((k_2+1) 2^{-m})^{n-1} - (k_2 2^{-m})^{n-1}\right)\\
&\qquad \times \prod_{l = 3}^n \left(\left(\frac{k_l+1}{k_{l-1}+1}\right)^{n-l+1} - \left(\frac{k_l}{k_{l-1}+1}\right)^{n-l+1}\right)\\
&\approx 2^{-m}\left((k_2+1)^{n-1} - k_2^{n-1}\right)2^{-m(n-1)}\\
&\qquad \times \prod_{l = 3}^n \frac{1}{(k_{l-1}+1)^{n-l+1}}\left((k_l+1)^{n-l+1} - k_l^{n-l+1}\right)\\
&\approx 2^{-mn} \prod_{l = 2}^{n-1} \frac{(k_l+1)^{n-l+1} - k_l^{n-l+1}}{(k_l+1)^{n-l}}\\
&\approx 2^{-mn}.
\end{align*}

(ii) The edges of the dyadic boxes are coordinate lines $\gamma_l$ of the coordinate transform $\sigma$. Those are of the form
\begin{align*}
\gamma_l(t) &= \sigma(r,\theta_2,\ldots,t,\ldots,\theta_n).\\
&\qquad \qquad \qquad\; \overset{\uparrow}\theta_l-\text{coordinate}
\end{align*}
The length $\ell$ of the edges can therefore be computed directly:
\begin{align*}
\ell(\gamma_1) &= \int_{1-2^{-m}}^{1-2^{-m-1}} \|\gamma_1'(t)\| \, \mathrm{d}t = \int_{1-2^{-m}}^{1-2^{-m-1}} 1 \, \mathrm{d}t = 2^{-m-1},\\
\ell(\gamma_2) &= \int_{\tfrac{\pi}{2} k_2 2^{-m}}^{\tfrac{\pi}{2} (k_2+1) 2^{-m}} \|\gamma_2'(t)\| \, \mathrm{d}t = \int_{\tfrac{\pi}{2} k_2 2^{-m}}^{\tfrac{\pi}{2} (k_2+1) 2^{-m}} r \, \mathrm{d}t \lesssim 2^{-m},\\
&\vdots\\
\ell(\gamma_l) &= \int_{\tfrac{\pi}{2}\tfrac{k_l}{k_{l-1}+1}}^{\tfrac{\pi}{2}\tfrac{k_l+1}{k_{l-1}+1}} \|\gamma_l'(t)\| \, \mathrm{d}t = \int_{\tfrac{\pi}{2}\tfrac{k_l}{k_{l-1}+1}}^{\tfrac{\pi}{2}\tfrac{k_l+1}{k_{l-1}+1}} r\sin\theta_2\cdots \sin\theta_{l-1} \, \mathrm{d}t\\
&\lesssim (k_2+1) 2^{-m} \cdot \frac{k_3+1}{k_2+1} \cdot \ldots \cdot \frac{k_{l-1}+1}{k_{l-2}+1} \cdot \frac{1}{k_{l-1}+1} = 2^{-m},\\
&\vdots\\
\ell(\gamma_n) &= \int_{2\pi\tfrac{k_n}{k_{n-1}+1}}^{\tfrac{\pi}{2}\tfrac{k_n+1}{k_{n-1}+1}} \|\gamma_n'(t)\| \, \mathrm{d}t = \int_{\tfrac{\pi}{2}\tfrac{k_n}{k_{n-1}+1}}^{\tfrac{\pi}{2}\tfrac{k_n+1}{k_{n-1}+1}} r\sin\theta_2\cdots \sin\theta_{n-1} \, \mathrm{d}t\\
&\lesssim (k_2+1) 2^{-m} \cdot \frac{k_3+1}{k_2+1} \cdot \ldots \cdot \frac{k_{n-1}+1}{k_{n-2}+1} \cdot \frac{1}{k_{n-1}+1} = 2^{-m}.
\end{align*}
It follows $\diam(B_j) \approx 2^{-m}$.

(iii) We only need to show $|B_j^*| \lesssim 2^{-mn}$. Clearly, $B_j^*$ is contained in a ball of diameter at most $\diam(B_j) + 2^{-m-1} \approx 2^{-m}$. Hence $|B_j^*| \lesssim 2^{-mn}$.

(iv) Let $x \in \bbB_n$ with $|x| \in [1-2^{-m},1-2^{-m-1}]$ and denote the set of indices $j$ such that $B_j \cap E(x,2^{-m-2}) \neq \emptyset$ by $\cJ_x$. It suffices to show that the cardinality of $\cJ_x$ is bounded independently of $x$. Clearly, $B_j$ is of generation $m-1$, $m$ or $m+1$ for every $j \in \cJ_x$. Therefore, by (i) and (ii), $|B_j| \approx 2^{-nm}$ and $\diam(B_j) \approx 2^{-m}$ for every $j \in \cJ_x$. It follows that there is a constant $C$ (only depending on $n$) such that $B_j \subset E(x,C2^{-m})$ for every $j \in \cJ_x$. Since $|E(x,C2^{-m})| \approx 2^{-mn}$, $E(x,C2^{-m})$ can only contain finitely many essentially disjoint sets of volume $\approx 2^{-mn}$. This finite number is the constant $N$ we are looking for; it only depends on $n$. This finishes the proof.

(v) As $1 - 2^{-m} \leq |x| \leq 1 - 2^{-m-1}$ for $x$ in a dyadic box of generation $m$, this is clear.

(vi) In view of (ii), there is a constant $C $ such that $|a-b|\leq C 2^{-m}$
for all $a,b\in B_j^{\ast}$. We also have $1-|a|^2\geq 2^{-m-2}$ and $1-|b|^2\geq 2^{-m-2}$. Hence, by \eqref{1.15b} and \eqref{2.1a},
\begin{align*}
	\frac {1}{1-|\varphi_a(b)|^2} &= \frac{[b,a]^2}{w(a)w(b)}
	=\frac{w(a)w(b) + |a|^2 + |b| ^2 - 2 a\cdot b }{w(a)w(b)} \\
	&=1+\frac {|a-b|^2} {w(a)w(b)} \leq 1+16C^2,
\end{align*}
and therefore,
\[
d(a,b) = \log \left(\frac {(1+|\varphi_a(b)|)^2}{1-|\varphi_a(b)|^2}\right) \leq \log\left(\frac {4} {1-|\varphi_a(b)|^2}\right)\leq \log(4+64C^2).
\]
The assertion (vi) now follows from Lemma \ref{lem:controlledbymetric}.
\end{proof}

\subsection{Basic facts about the space \texorpdfstring{$b_{\lambda}^p$}{bp}}
\label{sec2.2}
We will need a number of results on the spaces $b_{\lambda}^p$. The
following was proven in {\cite[Lemma 2.8]{JP}}.

\BEL
\label{lem:derivofkernel}
For all multi-indices $\alpha \in \bbN_0^n$ and all $\lambda > -1$,
\[
|D_x^{\alpha} R_{\lambda}(x,y)| \lesssim [x,y]^{-n-\lambda-|\alpha|} \ , \ x,y \in {\bbB_n}.
\]
\ENL

To prove the next lemma, we use the Forelli--Rudin type estimate
\begin{equation}\label{e:liu}
	\int_{\bbB_n} \frac{(1-|y|^2)^t}{[x,y]^{n+s+t}} \, \mathrm{d}V(y)\approx (1-|x|^2)^{-s},
\end{equation}
where $s>0$ and $t>-1$ (see \cite[Proposition 2.2]{LS}).

\BEL\label{lem:intgop}
Let $\lambda>-1$ and $1<p<\infty$. Then the integral operator defined by
\begin{equation*}
\Lambda_{\lambda} f(x):=\int\limits_{\bbB_n} \frac {f(y)}{[x,y]^{n+\lambda}} \, \mathrm{d}V_{\lambda}(y), \quad x\in {\bbB_n},
\end{equation*}
is bounded on $L_{\lambda}^p$.
\ENL
\begin{proof}
Define $h(x) = (1-|x|^2)^\alpha$ and choose $\frac{-1-\lambda}{\max\{p,q\}}<\alpha<0$. Then, by \eqref{e:liu}, when $1/p+1/q=1$, we get
$$
	\int_{\bbB_n} \frac{h(y)^q}{[x,y]^{n+\lambda}} \, \mathrm{d}V_\lambda(y)
	= \int_{\bbB_n} \frac{(1-|y|^2)^{\alpha q + \lambda}}{[x,y]^{n+\lambda}} \, \mathrm{d}V(y)
	\lesssim h(x)^q
$$
and similarly $\int_{\bbB_n} \frac{h(x)^p}{[x,y]^{n+\lambda}} \, \mathrm{d}V_\lambda(x) \lesssim h(y)^p$. Now Schur's test completes the proof.
\end{proof}

\BEL\label{lem:normdom}
Let $\lambda>-1$, $1<p<\infty$ and $k\in \mathbb{N}$. If $\alpha$ is a multi-index with $|\alpha| = k$ and $f\in b_{\lambda}^p$, then
$w^k D^\alpha f\in L_{\lambda}^p$, and
\[
\left\|w^k D^\alpha f\right\|_{p,\lambda} ~\lesssim~ \|f\|_{p,\lambda}.
\]
\ENL

\begin{proof}
In view of Lemma \ref{lem:pointwiseest}, $b_{\lambda}^p \subset b_{\gamma}^2$
for sufficiently large $\gamma$, and hence
\[
f(x)=\int\limits_{\bbB_n} R_{\gamma}(x,y)f(y) \, \mathrm{d}V_{\gamma}(y).
\]
Differentiating under the integral sign, we obtain
\begin{equation}
\label{eqn:reprnofderiv}
w(x)^{|\alpha|} D^{\alpha} f(x)=\int\limits_{\bbB_n} K(x,y) f(y) \, \mathrm{d}V_{\lambda}(y),
\end{equation}
where
\[K(x,y):=w(x)^{|\alpha|} w(y)^{\gamma-\lambda} D_x^{\alpha}R_{\gamma}(x,y).\]
By Lemma \ref{lem:derivofkernel},
\[|K(x,y)| \lesssim \frac {w(x)^{|\alpha|}w(y)^{\gamma-\lambda}} {[x,y]^{n+\gamma+|\alpha|}} \lesssim \frac {1} {[x,y]^{n+\lambda}},\]
where the last inequality follows from
\[[x,y] \geq 1-|x||y| \geq \max\{1-|x|,1-|y|\} \geq \frac{1}{2}\max\{w(x),w(y)\}.\]
Using \eqref{eqn:reprnofderiv} and Lemma \ref{lem:intgop}, we get
\[\left\|w^{|\alpha|} D^{\alpha} f\right\|_{p,\lambda} \lesssim \|f\|_{p,\lambda},\]
which completes the proof.
\end{proof}

\subsection{Integration by parts in spherical boxes.}
\label{sec2.3}

The results in \cite{TV2} were based on a tricky integration by parts. We have to
exploit this procedure in even higher generality, hence, it is useful to
expose the corresponding general integration-by-parts-formula. The set of $n$-times continuously
differentiable functions on $\bbB_n$ will be denoted by $C^n(\bbB_n)$. In the following, the Jacobian determinant of the coordinate transform $\sigma$ will be denoted by $J_{\sigma}$.

\BEL
\label{lem299}
Let $f \in L^1_{\rm loc}$, $g \in C^n(\bbB_n)$ and $x,y \in \bbQ_n$. Then, with $F= f \circ \sigma$ and $ G= g \circ\sigma$,
\begin{align} \label{2.33}
\int\limits_{B(x,y)}  fg \, \mathrm{d}V_{\lambda} &= \int\limits_{Q(x,y)} F(\gamma) G(\gamma) J_{\sigma}(\gamma) w (\sigma(\gamma))^\lambda \, \mathrm{d}\gamma\notag\\
&= \sum_{\alpha\in \{0,1\}^n} s_{\alpha} \int\limits_{Q_\alpha(x,y)} \bigg(\int\limits_{Q(x,\gamma_\alpha )} F(\tau) J_{\sigma}(\tau) w(\sigma(\tau))^\lambda \, \mathrm{d}\tau  \bigg)\\
&\qquad \qquad \qquad \qquad \qquad \times D^\alpha G(\gamma_\alpha ) \, \mathrm{d} \gamma_\alpha,\notag
\end{align}
where $s_{\alpha} \in \{\pm 1\}$ for all $\alpha \in \{0,1\}^n$.
\ENL

Recall that $Q_{\alpha}(x,y) = \prod\limits_{\substack{k = 1,\ldots,n \\ \alpha_k = 1}} [x_k,y_k] \times \prod\limits_{\substack{k = 1,\ldots,n \\ \alpha_k = 0}} \{y_k\}$, which means that the integration  $\int_{Q_{\alpha}(x,y)} \, \mathrm{d}\gamma_\alpha$ is performed only in those variables $\gamma_k$ where $\alpha_k = 1$. This is in concordance with the notation \eqref{1.50} since $(\gamma_\alpha)_k = y_k$ if $\alpha_k = 0$.

\begin{proof}
Let $\mathbbm{1} := (1,\ldots,1)$. We will use the following well-known formula
\begin{equation} \label{eqn:product_rule}
(D^{\mathbbm{1}}u) \cdot v = \sum\limits_{\alpha \in \{0,1\}^n} (-1)^{|\alpha|} D^{\mathbbm{1}-\alpha}(u \cdot D^{\alpha}v),
\end{equation}
which is easily proven by induction. Now choose
\[u(\gamma) := \int_{Q(x,\gamma)} F(\tau)J_{\sigma}(\tau)w(\sigma(\tau))^{\lambda} \, \mathrm{d}\tau, \quad \text{and} \quad v(\gamma) := G(\gamma),\]
and observe that $u(\gamma) = 0$ if $x_k = \gamma_k$ for some $k$. Integrating the formula \eqref{eqn:product_rule} over $Q(x,y)$ and using the fundamental theorem of calculus multiple times yields the result.
\end{proof}

\section{Proofs of the main results.}
\label{sec3}

Before giving the proofs of our main results, we still need to consider four lemmas. The first one is used to fix a small flaw in
\cite{TV2}: in the reference, the inequality (3.8) is not true as such, but the
integration domain has to be replaced by a larger set. This, however, is not
difficult, and we use here the enlarged dyadic boxes $B_j^*$ to this end.

\BEL\label{lem:mvp}
Let $f$ be a harmonic function on ${\bbB_n}$ and let $B_j$ be one of our dyadic boxes. Then for each $j$,
\begin{equation}\label{eqn:meanvalueineq}
|f(x)|\lesssim \frac {1}{|{B_j}|_{\lambda}} \int\limits_{{B_j^*}} |f| \, \mathrm{d}V_{\lambda}
\end{equation}
for every $x\in {B_j}$.
\ENL

\begin{proof}
Suppose that $B_j$ is of generation $m$. By definition of the sets $B_j^*$, we have $E(x,\delta 2^{-m-2})\subset {B_j^*}$ for every $x\in {B_j}$. The mean value property of harmonic functions
yields
\[f(x)=\frac {1}{|E(x,2^{-m-2})|} \int\limits_{E(x,2^{-m-2})} f(y) \, \mathrm{d}V(y).\]
Thus,
\begin{align*}
|f(x)| &\leq \frac {1}{|E(x,2^{-m-2})|} \int\limits_{E(x,2^{-m-2})} |f(y)| \, \mathrm{d}V(y)\\
&\lesssim 2^{nm} \int\limits_{{B_j^*}} |f(y)| \, \mathrm{d}V(y)\\
&\lesssim 2^{(n+\lambda)m} \int\limits_{{B_j^*}} |f(y)| w(y)^{\lambda} \, \mathrm{d}V(y),
\end{align*}
where the last inequality follows from the fact that $w(y) \approx 2^{-m}$ for $y\in {B_j^*}$; see Lemma \ref{lem:propertiesofQ}. Since $|{B_j}|_{\lambda}\approx 2^{-(n+\lambda)m}$,
this proves \eqref{eqn:meanvalueineq}.
\end{proof}

In order to apply Lemma \ref{lem299}, we need to transfer partial derivatives from spherical coordinates to Cartesian coordinates. For this we need the following lemma. We note that there is an explicit formula for iterated partial derivatives, the Fa\`a di Bruno formula, but since we do not need it in full generality, we decided to give a proof in simple terms.

\begin{lemma} \label{lem:iterated_derivatives}
Let $g \in C^n(\bbB_n)$ and $\alpha \in \{0,1\}^n$, $|\alpha| \geq 1$. Then for every $\beta \in \bbN_0^n$ with $1 \leq |\beta| \leq |\alpha|$ there is a $d_{\beta} \in C^{\infty}(\bbQ_n)$ such that
\[D^{\alpha}_{\gamma} g(\sigma(\gamma)) = \sum\limits_{1 \leq |\beta| \leq |\alpha|} c_{\beta}(\gamma) d_{\beta}(\gamma)(D^{\beta}_x g)(x),\]
where
\[c_{\beta}(\gamma) = \prod\limits_{j = 2}^{n-1} (\sin\theta_j)^{\max\big\{0,\, |\beta| - \sum\limits_{i = 1}^j \alpha_i\big\}},\]
$\gamma = (r,\theta_2,\ldots,\theta_n) \in \bbQ_n$ and $x = \sigma(\gamma)$.
\end{lemma}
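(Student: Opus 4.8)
The plan is to prove the claim by induction on the number of partial derivatives we take, i.e.~on $|\alpha|$. When $|\alpha| = 1$, say $\alpha = e_l$ for some $l \in \{1,\ldots,n\}$, the chain rule gives
\[
\partial_{\gamma_l} g(\sigma(\gamma)) = \sum\limits_{i = 1}^n (\partial_{x_i}g)(x) \, \partial_{\gamma_l}\sigma_i(\gamma),
\]
and one simply inspects the explicit formula for $\sigma$ to read off that $\partial_{\gamma_l}\sigma_i(\gamma)$ is, up to a smooth bounded factor (a product of cosines and $\pm 1$'s, which absorbs into $d_{e_i}$), equal to $r$ times $\prod_{j=2}^{l-1}\sin\theta_j$ when $i \geq l$ (and the derivative vanishes or loses a sine when $i < l$). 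Matching this against the asserted exponent $\max\{0, 1 - \sum_{i=1}^{j}\alpha_i\}$, which equals $1$ for $j < l$ and $0$ for $j \geq l$, establishes the base case. The factor $r$ is harmless: it is smooth on $\bbQ_n$ and can be folded into $d_\beta$.

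For the inductive step, suppose the formula holds for some $\alpha$ with $|\alpha| < n$ and let $l$ be an index with $\alpha_l = 0$; we apply $\partial_{\gamma_l}$ to the expression for $D^\alpha_\gamma g(\sigma(\gamma))$. Each term $c_\beta(\gamma) d_\beta(\gamma)(D^\beta_x g)(x)$ produces, by the product and chain rules, three kinds of contributions: (a) $\partial_{\gamma_l}$ hits $d_\beta$, giving back a term with the same $c_\beta$ and the same $D^\beta_x g$ but a new smooth coefficient; (b) $\partial_{\gamma_l}$ hits $c_\beta$, which differentiates a power of $\sin\theta_l$ (if $l \in \{2,\ldots,n-1\}$), lowering the exponent by one and producing a $\cos\theta_l$ factor — again absorbed into the smooth part — with $D^\beta_x g$ unchanged; (c) $\partial_{\gamma_l}$ hits $(D^\beta_x g)(\sigma(\gamma))$, producing $\sum_i (D^{\beta + e_i}_x g)(x)\,\partial_{\gamma_l}\sigma_i(\gamma)$, where as in the base case $\partial_{\gamma_l}\sigma_i(\gamma)$ carries a factor $\prod_{j=2}^{l-1}\sin\theta_j$ times something smooth. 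One then checks that in every case the new coefficient in front of $D^{\beta'}_x g$ (with $\beta' = \beta$ in cases (a), (b) and $\beta' = \beta + e_i$ in case (c)) is exactly $c_{\beta'}$ times a smooth function: in cases (a), (b) one has $|\beta'| = |\beta|$ while $\alpha$ grew by $e_l$ with $l \notin \operatorname{supp}\alpha$, so $\sum_{i=1}^j \alpha_i$ is unchanged for $j < l$ and increased by one for $j \geq l$, which can only decrease the relevant exponents — the surplus sine powers are precisely what case (b) removed, and in case (a) nothing was removed but nothing needs to be; in case (c) both $|\beta|$ and $\sum_{i=1}^j\alpha_i$ increase by one for $j \geq l$ (and $|\beta|$ alone increases for $j < l$, matching the extra $\prod_{j<l}\sin\theta_j$), so the exponent $\max\{0,|\beta| - \sum_{i\le j}\alpha_i\}$ behaves correctly. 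Finally one discards any resulting term with $|\beta'| > |\alpha| + 1$; there are none, since each step raises $|\beta|$ by at most one.

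The one genuine subtlety — and the step I expect to require the most care — is the bookkeeping of the sine exponents in case (c) when the new index $i$ is smaller than $l$: there the chain rule contributes $\prod_{j=2}^{l-1}\sin\theta_j$, i.e.~extra sine factors for $j < l$, but $\sum_{i=1}^j\alpha_i$ does not change for those $j$ (since $\alpha_l = 0$ and we are only adding $e_l$), whereas $|\beta|$ does increase by one, so the target exponent $\max\{0,|\beta'| - \sum_{i\le j}\alpha_i\}$ for $j < l$ goes up by exactly one — which is precisely the extra sine supplied by $\partial_{\gamma_l}\sigma_i$. Verifying this matching termwise, together with the symmetric check that when $i \geq l$ no spurious sines are needed, is the heart of the argument; everything else is routine product-rule expansion and the observation that cosines, the radial factor $r$, and constants are smooth on all of $\bbQ_n$ and therefore legitimately absorbed into the functions $d_\beta$.
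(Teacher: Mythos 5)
Your proposal is correct and takes essentially the same route as the paper: induction on $|\alpha|$, with the base case read off from the chain rule and the inductive step obtained by differentiating the term $c_\beta d_\beta\,(D^\beta_x g)$ with respect to a new variable $\gamma_l$ ($\alpha_l=0$) and matching the resulting sine exponents against the formula for $c_{\beta'}$ under $\alpha\mapsto\alpha+e_l$. The paper carries out the bookkeeping for the single factor $(\sin\theta_2)^{|\beta|-\alpha_1-\alpha_2}$ and then declares that the remaining factors $\sin\theta_3,\dots,\sin\theta_{n-1}$ are handled ``in the same way,'' whereas you track all the exponents simultaneously through the case split (a)/(b)/(c); that is a cosmetic difference, not a different argument. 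One small imprecision in your base case: for $\alpha=e_l$ the claim that $\partial_{\gamma_l}\sigma_i$ ``vanishes or loses a sine when $i<l$'' is not quite right --- for $i=l-1$ it does not vanish and in fact carries an \emph{extra} $\sin\theta_l$ --- but since what is needed is only that $\prod_{j=2}^{l-1}\sin\theta_j$ divides $\partial_{\gamma_l}\sigma_i$ whenever the derivative is nonzero, and that always holds, the conclusion stands. It is also worth making explicit, as the paper does, that in case (b) no negative sine power can arise: if the exponent of $\sin\theta_l$ in $c_\beta$ is zero then $\partial_{\gamma_l}c_\beta$ is simply zero, so the dangerous-looking decrement never occurs.
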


\begin{proof}
We only prove that one can factor out $(\sin\theta_2)^{|\beta|-\alpha_1-\alpha_2}$. As will be clear from the proof, factoring out the other terms in the product can then be done in the same way. We also note that the power of each $\sin\theta_j$ in the expansion of $D^{\alpha}_{\gamma}(g \circ \sigma)$ can never get negative. It can therefore be assumed that $|\beta| - \alpha_1 - \alpha_2 \geq 1$; otherwise there is nothing to prove.

We prove the claim by induction over $|\alpha|$. First assume that $|\alpha| = 1$. Then
\[D^{\alpha}_{\gamma} (g \circ \sigma) = \frac{\partial (g \circ \sigma)}{\partial \gamma_k} = \sum\limits_{l = 1}^n \frac{\partial x_l}{\partial \gamma_k} \frac{\partial g}{\partial x_l}\]
for some $k \in \{1,\ldots,n\}$. Now observe that $\sin\theta_2$ is necessarily a factor of $\frac{\partial x_l}{\partial \gamma_k}$ unless $k \in \{1,2\}$. This completes the proof for $|\alpha| = 1$.

To complete the induction process, we compute
\[\frac{\partial}{\partial \gamma_k} \sum\limits_{1 \leq |\beta| \leq |\alpha|} (\sin\theta_2)^{|\beta|-\alpha_1-\alpha_2} d_{\beta}D^{\beta}_x g\]
for each $k \in \{1,\ldots,n\}$ with $\alpha_k = 0$ and show that we can factor out sufficiently many $\sin\theta_2$. First assume $k \neq 2$. Then
\begin{align*}
&\frac{\partial}{\partial \gamma_k} \sum\limits_{1 \leq |\beta| \leq |\alpha|} (\sin\theta_2)^{|\beta|-\alpha_1-\alpha_2} d_{\beta}D^{\beta}_x g\\
&= \sum\limits_{1 \leq |\beta| \leq |\alpha|} (\sin\theta_2)^{|\beta|-\alpha_1-\alpha_2} \left(\frac{\partial d_{\beta}}{\partial \gamma_k}D^{\beta}_x g + d_{\beta}\sum\limits_{l = 1}^n \frac{\partial x_l}{\partial \gamma_k} \frac{\partial}{\partial x_l} D^{\beta}_x g\right).
\end{align*}
Let $\alpha'$ be equal to $\alpha$ except that the $k$-th entry is flipped from $0$ to $1$. If $k \neq 1$, we can factor out another $\sin\theta_2$ from $\frac{\partial x_l}{\partial \gamma_k}$ as above and therefore the power of $\sin\theta_2$ again matches the order of the derivative in each term. If $k = 1$, then $\frac{\partial x_1}{\partial \gamma_1} = \cos\theta_2$, so there is no additional $\sin\theta_2$ to factor out. However, since $\alpha'_1 = \alpha_1 + 1$, we still have the correct power of $\sin\theta_2$ in the expansion. Now let $k = 2$. In this case
\begin{align*}
&\frac{\partial}{\partial \gamma_k} \sum\limits_{1 \leq |\beta| \leq |\alpha|} (\sin\theta_2)^{|\beta|-\alpha_1-\alpha_2} d_{\beta}D^{\beta}_x g\\
&= \sum\limits_{1 \leq |\beta| \leq |\alpha|} (\sin\theta_2)^{|\beta|-\alpha_1-\alpha_2} \left(\frac{\partial d_{\beta}}{\partial \gamma_k}D^{\beta}_x g + d_{\beta}\sum\limits_{l = 1}^n \frac{\partial x_l}{\partial \gamma_k} \frac{\partial}{\partial x_l} D^{\beta}_x g\right)\\
& \qquad \qquad \quad  + (\sin\theta_2)^{|\beta|-\alpha_1-\alpha_2-1} \cos\theta_2 d_{\beta}D^{\beta}_x g.
\end{align*}
As $\alpha'_2 = \alpha_2 + 1$, we are in the same situation as for $k = 1$. We again have the correct power of $\sin\theta_2$ in each term. This completes the proof. 
\end{proof}

The functions $c_{\beta}$ have the following important property.

\begin{lemma} \label{lem:5.3}
Let $\alpha \in \{0,1\}^n$, $|\alpha| \geq 1$ and $c_{\beta}(\gamma)$ be as above. Then
\[\int_{Q_{\alpha}(x^{(j)},y^{(j)})} c_{\beta}(\gamma) \, \mathrm{d}\gamma_{\alpha} \lesssim 2^{-m|\beta|}\]
for all $j \in \mathbb{N}$.
\end{lemma}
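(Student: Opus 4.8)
The plan is to reduce the bound to an explicit one-dimensional product integral. First I would invoke the symmetry reduction used in the proof of Lemma~\ref{lem:propertiesofQ} and assume $B_j = \sigma(C_k)$ for some $k = (m,k_2,\ldots,k_n) \in \cK$, so that we may work inside $[0,1)\times[0,\tfrac{\pi}{2}]^{n-2}\times[0,2\pi]$ where the order \eqref{eq:order} is the standard one and $x^{(j)}, y^{(j)}$ are the componentwise smallest and largest corners of $C_k$. Writing $b := |\beta|$ and $A_l := \alpha_1 + \cdots + \alpha_l$, we have $c_\beta(\gamma) = \prod_{l=2}^{n-1}(\sin\theta_l)^{e_l}$ with $e_l := \max\{0, b - A_l\}$, a product of functions of one variable each. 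Hence $\int_{Q_\alpha(x^{(j)},y^{(j)})} c_\beta \,\mathrm{d}\gamma_{\alpha}$ factors, by Fubini, over the coordinate directions $l = 1,\ldots,n$: in each direction with $\alpha_l = 1$ one integrates over the $l$-th edge of $C_k$, and in each direction with $\alpha_l = 0$ one evaluates at the top endpoint $y^{(j)}_l$.

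Next I would evaluate these one-dimensional factors. For $l \in \{1,n\}$, which do not occur in $c_\beta$, the factor is the edge length $L_l$ if $\alpha_l = 1$ and is $1$ if $\alpha_l = 0$. For $2 \le l \le n-1$ the angle $\theta_l$ stays in $[0,\tfrac{\pi}{2}]$ on $C_k$, so $\sin\theta_l \approx \theta_l$; a direct integration of $\theta_l^{e_l}$ over the edge (using $(a+1)^{e+1} - a^{e+1} \approx (e+1)(a+1)^e$ with $e \le n$) shows the factor is $\approx L_l M_l^{e_l}$ if $\alpha_l = 1$ and $\approx M_l^{e_l}$ if $\alpha_l = 0$, where $M_l$ is the value of $\sin\theta_l$ at the top of the $l$-th edge of $C_k$, up to constants. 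Reading off from the definition of $C_k$ we have $L_1 \approx L_2 \approx 2^{-m}$, $L_l \approx (k_{l-1}+1)^{-1}$ for $l \ge 3$, and $M_2 \approx (k_2+1)2^{-m}$, $M_l \approx \frac{k_l+1}{k_{l-1}+1}$ for $3 \le l \le n-1$. Collecting all factors,
\[\int_{Q_\alpha(x^{(j)},y^{(j)})} c_\beta \,\mathrm{d}\gamma_{\alpha} \approx \Big(\prod_{l:\alpha_l=1} L_l\Big)\prod_{l=2}^{n-1} M_l^{e_l} .\]

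Finally I would separate the powers of $2^{-m}$ from those of the $(k_j+1)$: up to a constant depending only on $n$, the right-hand side equals $2^{-m(\alpha_1+\alpha_2+e_2)}$ times $\prod_{j=2}^{n-1}(k_j+1)^{p_j}$, where $p_j := e_j - e_{j+1} - \alpha_{j+1}$ with the convention $e_n := 0$. A three-way case distinction --- according to whether $b - A_j \le 0$; or $b - A_j \ge 1$ and $\alpha_{j+1} = 0$; or $b - A_j \ge 1$ and $\alpha_{j+1} = 1$ --- shows $p_j \le 0$ in each case, so $\prod_j(k_j+1)^{p_j} \le 1$ since every $k_j + 1 \ge 1$. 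Moreover $e_2 = \max\{0, b - \alpha_1 - \alpha_2\}$ gives $\alpha_1 + \alpha_2 + e_2 \ge b$, hence $2^{-m(\alpha_1+\alpha_2+e_2)} \le 2^{-mb} = 2^{-m|\beta|}$. Combining the two estimates proves the lemma; when $n = 2$ the product defining $c_\beta$ is empty and the bound is immediate from $\prod_{l:\alpha_l=1}L_l \lesssim 2^{-m|\alpha|} \le 2^{-m|\beta|}$. The only genuine difficulty is the bookkeeping of the $(k_j+1)$-powers: one must see that the $k$-dependence of the edge lengths $L_l$ telescopes against that of the $M_l^{e_l}$, and this is exactly what the inequality $p_j \le 0$ records.
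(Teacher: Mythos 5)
Your proof is correct and follows essentially the same route as the paper's: reduce to $B_j = \sigma(C_k)$, evaluate the one-dimensional factors of the product integral from the explicit form of $C_k$, and verify that the powers of $2^{-m}$ sum to at least $|\beta|$ while the powers of the $(k_l+1)$'s are nonpositive. The paper's own computation in the case $h \geq 2$ is written as if the $(k_l+1)$-powers telescope exactly to $1$, which only happens for special $\alpha$; your three-way case analysis showing $p_j \leq 0$ is the cleaner, uniform version of the same bookkeeping and rightly yields $\lesssim 2^{-m|\beta|}$ rather than an exact equality.
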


\begin{proof}
As usual, we may assume that $Q(x^{(j)},y^{(j)}) = C_k$ for some $k \in \cK$. Choose $h \in \mathbb{N}$ such that $|\beta| = \sum\limits_{i = 1}^h \alpha_i$. If $h = 1$, then $|\beta| = \alpha_1 = 1$ as well as $c_{\beta}(\gamma) = 1$. Therefore,
\[\int_{Q_{\alpha}(x^{(j)},y^{(j)})} c_{\beta}(\gamma) \, \mathrm{d}\gamma_{\alpha} \lesssim 2^{-m} = 2^{-m|\beta|}.\]
If $h \geq 2$, another direct computation shows
\begin{align*}
\int_{Q_{\alpha}(x^{(j)},y^{(j)})} c_{\beta}(\gamma) \, \mathrm{d}\gamma_{\alpha} &\lesssim 2^{-m\alpha_1} (k_2+1)^{|\beta|-\alpha_1-\alpha_2}2^{-m(|\beta|-\alpha_1)}\\
&\qquad \times \frac{(k_3+1)^{|\beta|-\alpha_1-\alpha_2-\alpha_3}}{(k_2+1)^{|\beta|-\alpha_1-\alpha_2}} \cdots \frac{(k_{n-1}+1)^{|\beta|-\sum\limits_{l = 1}^h \alpha_l}}{(k_{n-2}+1)^{|\beta|-\sum\limits_{l = 1}^{h-1} \alpha_l}}\\
&= 2^{-m|\beta|}.\qedhere
\end{align*}
\end{proof}

The following lemma is the most important technical step in the proof of our theorems.

\BEL
\label{lem3.2v}
Let $\psi \in L_ {\rm loc}^1$. For $z \in {\bbB_n}$ and every dyadic box ${B_j}$, we have
\[\left|T_{\psi} (\chi_{{B_j}} f)(z)\right|
\lesssim \frac {\widehat{\psi}_j}{|B_j|_{\lambda}} \sum_{|\beta| \leq n} \int\limits_{{B_j^*}}
\frac { w(y)^{|\beta|} |D^\beta f(y)|}{[z,y]^{n+\lambda}} \, \mathrm{d}V_{\lambda}(y).\]
\ENL

\begin{proof}
Suppose that the generation of ${B_j}$ is $m$. We may further assume that $B_j = \sigma(C_k)$ for some $k \in \cK$. Let $\gamma = (r,\theta_2,\ldots,\theta_n) \in \bbQ_n$ be such that $y = \sigma(\gamma) \in B_j$ and $\alpha \in \{0,1\}^n$, $|\alpha| \geq 1$. By Lemma \ref{lem:derivofkernel}, we have
\begin{equation}
|D_y^{\alpha}R_{\lambda}(z,y)| \lesssim [z,y]^{-n-\lambda-|\alpha|} \label{3.1c}
\end{equation}
for every $z \in \bbB_n$. Using Lemma \ref{lem:iterated_derivatives}, we obtain
\begin{align*}
\left| D_\gamma^{\alpha}\big( R_\lambda(z, \sigma(\gamma))f( \sigma(\gamma)) \big)\right| &\leq \sum\limits_{1 \leq |\beta| \leq |\alpha|} c_{\beta}(\gamma) |d_{\beta}(\gamma)|\left|D^{\beta}_y \big(R_\lambda(z,y)f(y)\big)\right|\\
&\lesssim \sum\limits_{1 \leq |\beta| \leq |\alpha|} c_{\beta}(\gamma) \sum\limits_{\tilde{\beta} \leq \beta} \big|D^{\beta-\tilde{\beta}}_y R_\lambda(z,y)\big| \big|D^{\tilde{\beta}}_y f(y)\big|\\
&\lesssim \sum\limits_{1 \leq |\beta| \leq |\alpha|} c_{\beta}(\gamma) \sum\limits_{\tilde{\beta} \leq \beta} 2^{m(|\beta| - |\tilde{\beta}|)} [z,y]^{-n-\lambda}\\
&\qquad \qquad \qquad \qquad \quad \times \big|D^{\tilde{\beta}}_y f(y)\big|
\end{align*}
where the last inequality follows from \eqref{3.1c} and $[z,\sigma(\gamma)] \geq 1 - |\sigma(\gamma)| \approx 2^{-m}$ by Lemma \ref{lem:propertiesofQ}. 
We now apply Lemma \ref{lem:mvp} to the functions
$D^{\tilde{\beta}} f$ and use (v) and (vi) of Lemma \ref{lem:propertiesofQ}.
This yields
\begin{align} \label{3.10}
&\left| D_\gamma^{\alpha}\big( R_\lambda(z,\sigma(\gamma))f(\sigma(\gamma)) \big)\right|\notag\\
&\lesssim \sum\limits_{1 \leq |\beta| \leq |\alpha|} c_{\beta}(\gamma) \sum\limits_{\tilde{\beta} \leq \beta} 2^{m(|\beta| - |\tilde{\beta}|)} [z,y]^{-n-\lambda} \frac {1}{|{B_j}|_{\lambda}}\int\limits_{{B_j^*}} |D^{\tilde{\beta}} f | \, \mathrm{d}V_{\lambda}\notag\\
&\lesssim \sum\limits_{1 \leq |\beta| \leq |\alpha|} c_{\beta}(\gamma) \sum\limits_{\tilde{\beta} \leq \beta} 2^{m|\beta|} \frac {1}{|{B_j}|_{\lambda}}\int\limits_{{B_j^*}} \frac { w(y)^{|\tilde{\beta}|} |D^{\tilde{\beta}} f(y)|}{[z,y]^{n+\lambda}} \, \mathrm{d}V_{\lambda}(y)\\
&\lesssim \sum\limits_{1 \leq |\beta| \leq |\alpha|} c_{\beta}(\gamma) 2^{m|\beta|} \frac {1}{|{B_j}|_{\lambda}}\int\limits_{{B_j^*}} \frac { w(y)^{|\beta|} |D^{\beta} f(y)|}{[z,y]^{n+\lambda}} \, \mathrm{d}V_{\lambda}(y)\notag
\end{align}
for all $z \in \bbB_n$. We are now going to apply the integration by parts lemma, Lemma \ref{lem299}. Recall that $B_j = B(x^{(j)},y^{(j)}) = \sigma(Q(x^{(j)},y^{(j)}))$. Integrating \eqref{3.10} over $Q_{\alpha}(x^{(j)},y^{(j)})$ and using Lemma \ref{lem:5.3} yields
\begin{align} \label{5.4}
&\int_{Q_{\alpha}(x^{(j)},y^{(j)})} \left| D_\gamma^{\alpha}\big( R_\lambda(z, \sigma(\gamma))f( \sigma(\gamma)) \big)\right| \, \mathrm{d}\gamma_{\alpha}\notag\\
&\qquad \qquad \qquad \qquad \qquad \quad \lesssim \sum_{1 \leq |\beta| \leq |\alpha| } \frac {1}{|{B_j}|_{\lambda}} \int\limits_{{B_j^*}} \frac { w(y)^{|\beta|} |D^{\beta} f(y)|}{[z,y]^{n+\lambda}} \, \mathrm{d}V_{\lambda}(y).
\end{align}
We now  apply \eqref{2.33} to the integral
\bea
T_\psi (\chi_{B_j} f)  (z) = \int\limits_{B_j} \psi(y) f(y) R_\lambda(z,y) \, \mathrm{d}V_\lambda (y).
\label{3.1p}
\eea
For the factor $f$  in \eqref{2.33} we take the function $\psi $ and for $g$ the
function  $R_{\lambda}(z, \cdot) f $ with a fixed $z \in \bbB_n$.
This yields (see the remark just after this proof)
\begin{align} \label{3.27}
&\bigg|\int\limits_{{B_j}} \psi(y)  f(y)R_{\lambda}(z,y) \, \mathrm{d}V_{\lambda}(y)\bigg|\notag\\
&\leq \sum_{\alpha\in \{0,1\}^n}
\int\limits_{Q_\alpha(x^{(j)},y^{(j)})} \bigg| \int\limits_{Q(x^{(j)},\gamma_\alpha)} \psi ( \sigma(\tau)) w(\sigma(\tau))^\lambda J_{\sigma}(\tau) \, \mathrm{d}\tau \bigg|\\
& \qquad \qquad \qquad \qquad \times \Big|
D_\gamma^\alpha  \big( R_\lambda(z, \sigma(\gamma_\alpha)) f(\sigma (\gamma_\alpha )) \big) \Big| \, \mathrm{d}\gamma_\alpha. \notag
\end{align}

The change of the variables formula turns the integral over the set $Q(x^{(j)}, \gamma_\alpha)$ into
\bea
\bigg|\int\limits_{B(x^{(j)}, y)} \psi \, \mathrm{d}V_\lambda\bigg| \leq \widehat{\psi}_j
\label{3.27a}
\eea

for $y = \sigma(\gamma_\alpha) \in B_j$, see \eqref{eqn:setfunction}. Applying \eqref{5.4}, we thus see that  \eqref{3.27} is bounded by a
constant times
\bea
\frac{\widehat{\psi}_j}{|B_j|_{\lambda}}\sum_{|\beta|\leq n}
\int\limits_{{B_j^*}} \frac { w(y)^{|\beta|} |D^\beta f(y)|}{[z,y]^{n+\lambda}} \, \mathrm{d}V_{\lambda}(y) ,
\label{3.27z}
\eea
since $ |\alpha| \leq n$.
So the lemma follows by combining \eqref{3.1p}--\eqref{3.27z}.
\end{proof}

We emphasize that the deduction \eqref{3.27} is in the core of our result:
By using integration by parts, it is possible to make estimates only in terms
of the modulus of the integral of $\psi$, and a direct bound involving the modulus of $\psi$ can be avoided.

We are now in the position to prove our main results:

\begin{proof}[Proof of Theorem \ref{thm:mainthm}]
Recall that the Toeplitz operator $T_{\psi}$ is defined with the help of the series
\begin{equation*}
T_\psi f(x)  = \sum_{j=1}^\infty  T_\psi(\chi_{B_j}f)(x),
\end{equation*}
cf.~\eqref{eqn:generalizedToeplitz}. We show that the series converges absolutely for almost every
$x \in \bbB_n$ and that the resulting operator is bounded. Indeed, by the previous lemma, assumption \eqref{eqn:weakCarleson} and Lemma \ref{lem:propertiesofQ}, we have
\begin{align} \label{eqn:absconvergence}
\sum_{j = 1}^\infty \left|T_{\psi} (\chi_{B_{j}} f)(x)\right| &\lesssim C_{\psi} \sum_{j = 1}^\infty \sum_{|\beta| \leq n} \int\limits_{B_{j}^{\ast}}
\frac { w(y)^{|\beta|} |D^\beta f(y)|}{[x,y]^{n+\lambda}} \, \mathrm{d}V_{\lambda}(y) \nonumber \\
&\lesssim {C_{\psi}} \sum_{|\beta| \leq n} \, \int\limits_{\bbB_ n}
\frac { w(y)^{|\beta|} |D^\beta f(y)|}{[x,y]^{n+\lambda}} \, \mathrm{d}V_{\lambda}(y)
\end{align}
for all $x \in \bbB_n$. By Lemmas \ref{lem:intgop} and \ref{lem:normdom}, we see that for all $|\beta|\leq n$,
\[\bigg\|\int\limits_{\bbB_n} \frac{w(y)^{|\beta|}|D^\beta f(y)|}{[x,y]^{n+\lambda}} \, \mathrm{d}V_{\lambda}(y)\bigg\|_{p,\lambda} \lesssim \left\|w^{|\beta|} D^\beta f\right\|_{p,\lambda} \lesssim \|f\|_{p,\lambda}.\]
This implies that the series $\sum_{j=0}^{\infty} \left|T_{\psi} (\chi_{B_{j}} f)(x)\right|$
is pointwise bounded by an $L_{\lambda}^p$-function, and thus it converges for
almost all $x\in {\bbB_n}$. Moreover, the above argument implies
\begin{equation} \label{eqn:Lpbounded}
\bigg\|\sum_{j=1}^{\infty} T_{\psi} (\chi_{B_{j}} f)\bigg\|_{p,\lambda}\lesssim C_{\psi} \|f\|_{p,\lambda}.
\end{equation}
By dominated convergence, the series also converges in $L_\lambda^p$. In particular, $T_\psi f \in b_\lambda^p$ for all $f \in b_\lambda^p$ and $\|T_{\psi}\|\lesssim C_{\psi}$.
\end{proof}

\begin{proof}[Proof of Theorem \ref{limit-thm}]
We first show that $T_{\psi_\rho}f \to T_\psi f$ for every $f \in b_\lambda^p$. As $(\widehat{\psi}_{\rho})_j \leq \widehat{\psi}_j$, the estimate \eqref{eqn:absconvergence} is uniform in $\rho$. That is, for almost every $x \in \bbB_n$, the series $\sum\limits_{j \in \bbN } T_{\psi_\rho} (\chi_{B_{j}} f)(x)$ converges absolutely and uniformly in $\rho$. In particular,
\[\lim\limits_{\rho \to 1} \sum_{j = 1}^\infty T_{\psi_\rho} (\chi_{B_{j}} f)(x) = \sum_{j = 1}^\infty \lim\limits_{\rho \to 1} T_{\psi_\rho} (\chi_{B_{j}} f)(x) = \sum_{j = 1}^\infty T_\psi (\chi_{B_{j}} f)(x).\]
Moreover, $\|T_{\psi_{\rho}}f\|_{p,\lambda} \lesssim C_\psi\|f\|$ by \eqref{eqn:Lpbounded}. By dominated convergence, we get $T_{\psi_\rho}f \to T_\psi f$ as $\rho \to 1$. For the adjoint observe
\[\langle T_\psi f,g\rangle = \sum_{j=1}^\infty  \langle \psi\chi_{B_j}f, g \rangle = \sum_{j=1}^\infty  \langle f, \overline{\psi}\chi_{B_j}g \rangle = \langle f,T_{\overline{\psi}} g \rangle\]
and thus $T_{\overline{\psi}_\rho}g \to T_\psi^* g$ follows analogously.
\end{proof}

\begin{proof}[Proof of Theorem \ref{thm:mainthm2}]
A routine normal families argument shows that the unit ball of $b_{\lambda}^p$ is compact in the topology of uniform convergence on compact subsets of ${\bbB_n}$. It therefore suffices to show that $\|T_{\psi}f_k\|_{p,\lambda}\to 0$ for all norm bounded sequences $( f_k )_{k=1}^{\infty}
\subset b_{\lambda}^p$ which converge to zero uniformly on compacta of ${\bbB_n}$.

So, we fix such a sequence $( f_k )_{k=1}^{\infty}$
with $\Vert f_k \Vert_{p,\lambda} \leq 1$ for all $k$. Let $\varepsilon > 0$ be arbitary. For all $j,k \in \mathbb{N}$ Lemma \ref{lem3.2v} implies
\[\left|T_{\psi} (\chi_{B_j} f_k)(x)\right| \lesssim \frac{\widehat{\psi}_j}{|B_j|_{\lambda}}
\sum_{|\beta| \leq n} \int\limits_{B_j^{\ast}} \frac { w(y)^{|\beta|} |D^\beta  f_k(y)|}{
[x,y]^{n+\lambda}} \, \mathrm{d}V_{\lambda}(y),\]
and the weak vanishing Carleson condition \eqref{eqn:weakvanishingCarleson} implies
\[
\lim_{j\to \infty} \frac{\widehat{\psi}_j}{|B_j|_{\lambda}}=0.
\]
We choose $N\in \mathbb{N}$ such that $\frac{\widehat{\psi}_j}{|B_j|_{\lambda}}<\varepsilon$ for $j>N$.

As the sets $B_j^{\ast}$ are bounded, there exists a constant $C>0$ such that
\[
\sum_{j=1}^{N} \sum_{|\beta| \leq n} \int\limits_{B_j^{\ast}} \frac {w(y)^{|\beta|}}{[x,y]^{n+\lambda}} \, \mathrm{d}V_{\lambda}(y) \leq C
\]
for all $x\in {\bbB_n}$. Since the sequence $( f_k )_{k=1}^{\infty}$, as well as the sequences of the derivatives of $f_k$,
converge to $0$ uniformly on compact subsets, we may choose $M\in \mathbb{N}$ such that
\[|D^\beta f_k(y)| \leq \varepsilon\]
for all $k\geq M$, $|\beta | \leq n$, $y \in B_j^{\ast}$ and $j \leq N$.

For $k \geq M$ we get
\bea
\sum_{j=1}^{\infty} \left|T_{\psi} (\chi_{B_j} f_k)(x)\right|
& \lesssim &
C_{\psi} \sum_{j=1}^{N}  \sum_{|\beta| \leq n} \,
\int\limits_{B_j^{\ast}} \frac { w(y)^{|\beta|} |D^\beta f_k(y)|}{[x,y]^{n+\lambda}} \, \mathrm{d}V_{\lambda}(y)
\rowpl
\,\varepsilon \sum_{j=N+1}^{\infty}  \sum_{|\beta| \leq n} \,
\int\limits_{B_j^{\ast}} \frac { w(y)^{|\beta|} |D^\beta f_k(y)|}{[x,y]^{n+\lambda}} \, \mathrm{d}V_{\lambda}(y)
\nonumber
\\
& \lesssim &
C_{\psi}\varepsilon + \varepsilon\sum_{|\beta| \leq n} \,
\int\limits_{\bbB_n} \frac { w(y)^{|\beta|} |D^\beta f_k(y)|}{[x,y]^{n+\lambda}} \, \mathrm{d}V_{\lambda}(y)
.\notag
\eea
Arguing as in the proof of Theorem \ref{thm:mainthm} we find that $\|T_{\psi}  f_k\|_{p,\lambda} \lesssim\varepsilon$ for sufficiently large $k$.
\end{proof}

\begin{proof}[Proof of Corollary \ref{main-cor}]
As $P_\lambda^{\rm an}$ and $P_\lambda$ are orthogonal projections and obviously
$A_{\lambda}^p ({\bf B}_n) \subset b_{\lambda}^p(\bbB_{2n})$, we have
$P_\lambda^{\rm an} = P_\lambda^{\rm an}P_\lambda$ on $L_{\lambda}^2 = L_{\lambda}^2(\bbB_{2n})$. Let $f \in L_{\lambda}^1$ have compact support. Choose a sequence $(f_n)_{n \in \bbN}$ in $L_{\lambda}^2$ with the same support as $f$ such that $\|f-f_n\|_{1,\lambda} \to 0$ as $n \to \infty$. Then
\begin{equation} \label{eqn:L1estimate}
	\|P_{\lambda}(f-f_n)\|_{p,\lambda} \lesssim \|f-f_n\|_{1,\lambda}
	\ \, \text{and} \ \,
	\|P_{\lambda}^{\rm an}(f-f_n)\|_{p,\lambda} \lesssim \|f-f_n\|_{1,\lambda}
\end{equation}
by Lemma \ref{lem:L1boundedness} and the respective result for the analytic case, which can be proven verbatim. In particular,
\[P_\lambda^{\rm an}P_\lambda f = P_\lambda^{\rm an}P_\lambda(f-f_n) + P_\lambda^{\rm an}P_\lambda f_n = P_\lambda^{\rm an}P_\lambda(f-f_n) + P_\lambda^{\rm an}f_n.\]
By \eqref{eqn:L1estimate}, the former converges to $0$ and the latter converges to $P_\lambda^{\rm an}f$ as $n \to \infty$. Hence $P_\lambda^{\rm an}f = P_\lambda^{\rm an}P_\lambda f$ for $f \in L_{\lambda}^1$ with compact support. Now let $\psi \in L^1_{\rm loc}$ and $f \in A_\lambda^p$. Then
\[\sum\limits_{j = 1}^\infty T_\psi^{\rm an}(\chi_{B_j}f) = \sum\limits_{j = 1}^\infty P_\lambda^{\rm an}(\psi\chi_{B_j}f) = \sum\limits_{j = 1}^\infty P_\lambda^{\rm an}P_\lambda(\psi\chi_{B_j}f).\]
As the series $\sum\limits_{j = 1}^\infty P_\lambda(\psi\chi_{B_j}f)$ converges in $L_\lambda^p$ by Theorem \ref{thm:mainthm}, and $P_\lambda^{\rm an}$ is continuous, we obtain
\[\sum\limits_{j = 1}^\infty P_\lambda^{\rm an}P_\lambda(\psi\chi_{B_j}f) = P_\lambda^{\rm an}\left(\sum\limits_{j = 1}^\infty P_\lambda(\psi\chi_{B_j}f)\right) = P_\lambda^{\rm an}T_\psi f.\]
In short, $T_\psi^{\rm an} = P_\lambda^{\rm an}T_{\psi}|_{A_\lambda^p}$. Properties such as boundedness and compactness therefore transfer from the harmonic to the analytic setting.
\end{proof}

\section{An example.}
\label{sec9}

Sufficient conditions for the boundedness of Toeplitz operators with radial symbols in Bergman spaces
$A_\lambda^p({\bf B}_n)$ have been given for example in \cite{GKV}, where $p=2$, or in \cite{LT}, where $n=1$. All such results concern symbols $\psi$ belonging at least to $L^1({\bf B}_n)$. Modifying
an example first presented in \cite{TV2} we show here that there are radial symbols with arbitrarily fast 
growing modulus, still satisfying the sufficient condition \eqref{eqn:weakCarleson} of Theorem 
\ref{thm:mainthm}  and thus  inducing bounded Toeplitz operators. 

If $\lambda > -1$ and $\psi = \psi(r)$ is a radial symbol on the ball $\bbB_n$, then our condition \eqref{eqn:weakCarleson} for boundedness simplifies to
\begin{equation}
\sup\limits_{m \in \bbN_0} 2^{m (1+ \lambda)} \!\!\!\!\!\!\! \sup\limits_{\rho \in [1-2^{-m},1-2^{-m-1}]} \left|\int_{1-2^{-m}}^{\rho} r^{n-1} \psi(r) (1-r^2)^\lambda \, \mathrm{d}r\right| < \infty.  \label{9.new}
\end{equation}
Let $f : [1,\infty) \to \bbR$ be any continuous function with $\inf\limits_{x \in [1,\infty)} 
f(x)x^{\lambda} > 0$. Define
\[
g : [1,\infty) \to [0,\infty), \quad g(x) = \int_1^x \frac{f(y)}{y^{1 - \lambda}} \, \mathrm{d}y
\]
and note that $g'(x) = \frac{f(x)}{x^{1 -  \lambda}} > 0$ and $g(x) \to \infty$ as $x \to \infty$. In particular, $g$ is invertible. Define
\[
\psi(r) = r^{-n+1}(1-r^2)^{-\lambda} f((1-r)^{-1})\exp\big(i \pi g((1-r)^{-1})\big).
\]
It is plain that the modulus of $\psi$ can be made to grow arbitrarily fast as $r \to 1$, nevertheless,
we claim that $\psi$ satisfies \eqref{9.new} and thus by Theorem \ref{thm:mainthm}, $T_\psi: b_\lambda^p
\to b_\lambda^p$  is bounded for all $1 < p < \infty$.

With the substitution $s := g((1-r)^{-1})$, i.e., $1-r = 1/g^{-1}(s)$,  we get for the modulus of 
imaginary part (the real part is treated in the same way) 
\begin{align*}
&\left|{\rm Im} \Big( \int_{1-2^{-m}}^{\rho} r^{n-1} \psi(r)(1-r^2)^\lambda \, \mathrm{d}r \Big) \right|\\
= & \left|\int_{1-2^{-m}}^{\rho} (1-r)^{-1 + \lambda}g'((1-r)^{-1})\sin\big(\pi g((1-r)^{-1})\big) \, \mathrm{d}r\right|\\
= & \left|\int_{g(2^m)}^{g((1-\rho)^{-1})} \big( g^{-1}(s) \big)^{-1-\lambda}\sin(\pi s) \, \mathrm{d}s\right|.
\end{align*}
Let $a$ be the smallest integer such that $a > g(2^m)$ and $b$ the largest integer such that $g((1-\rho)^{-1}) > b$. Then, as $\frac{1}{g^{-1}(s)}$ is decreasing, we get
\begin{align*}
&\left|\int_{g(2^m)}^{g((1-\rho)^{-1})} \big( g^{-1}(s) \big)^{-1-\lambda} \sin(\pi s) \, \mathrm{d}s\right|\\
\leq & \left|\int_{g(2^m)}^{a} \big( g^{-1}(s) \big)^{-1-\lambda} \sin(\pi s) \, \mathrm{d}s\right|
\\
& + 
\left|\sum\limits_{k = a}^{b-1}(-1)^k\int_k^{k+1} \big( g^{-1}(s) \big)^{-1-\lambda} |\sin(\pi s)| \, \mathrm{d}s\right|\\
& + \left|\int_b^{g((1-\rho)^{-1})} \big( g^{-1}(s) \big)^{-1-\lambda}\sin(\pi s) \, \mathrm{d}s\right|\\
\leq & 3 \cdot 2^{-m(1+ \lambda)} ,
\end{align*}
since we have the lower estimate $g^{-1} (s) \geq 2^m$ on all integration intervals and the series
in the middle term is alternating with decreasing absolute values of the terms.
This shows that $T_\psi$ is bounded. Similarly, replacing $g$ by
\[g : [1,\infty) \to [0,\infty), \quad g(x) = \int_1^x f(y) y^\lambda \, \mathrm{d}y\]
and choosing $\psi$ as above, yields a compact Toeplitz operator $T_{\psi}$. These examples show that symbols of arbitrary growth can induce bounded or even compact Toeplitz operators.

\bibliographystyle{amsplain}

\end{document}